\theoremstyle{plain}
\newtheorem{lemma}{Lemma}[section]
\newtheorem{Theorem}[lemma]{Theorem}
\newtheorem{theorem}[lemma]{Theorem}
\newtheorem{corollary}[lemma]{Corollary}
\newtheorem{Corollary}[lemma]{Corollary}
\newtheorem{proposition}[lemma]{Proposition}
\newtheorem{Proposition}[lemma]{Proposition}
\theoremstyle{definition}
\newtheorem{definition}[lemma]{Definition}
\newtheorem*{Acknowledgements}{Acknowledgements}
\newtheorem{remark}[lemma]{Remark}
\DeclareFontFamily{OT1}{pzc}{}
\DeclareFontShape{OT1}{pzc}{m}{it}{<-> s * pzcmi7t}{}
\DeclareMathAlphabet{\mathpzc}{OT1}{pzc}{m}{it}
\DeclareMathOperator{\VCyc}{\mathpzc{VCyc}}
\DeclareMathOperator{\Fin}{\mathpzc{Fin}}
\DeclareMathOperator{\Triv}{\mathpzc{Triv}}
\DeclareMathOperator{\uE}{\underline{E}}
\DeclareMathOperator{\E}{E}
\DeclareMathOperator{\EF}{E_\mathcal{F}}
\DeclareMathOperator{\cd}{cd}
\DeclareMathOperator{\ucd}{\underline{cd}}
\DeclareMathOperator{\uucd}{\underline{\underline{cd}}}
\DeclareMathOperator{\gd}{gd}
\DeclareMathOperator{\ugd}{\underline{gd}}
\DeclareMathOperator{\uugd}{\underline{\underline{gd}}}
\DeclareMathOperator{\gdF}{gd_\mathcal{F}}
\DeclareMathOperator{\cdF}{cd_\mathcal{F}}
\DeclareMathOperator{\pdF}{pd_\mathcal{F}}
\DeclareMathOperator{\id}{id}
\newcommand{\OFG}{\mathop{\mathcal{O}_\mathcal{F}G}}
\DeclareMathOperator{\pt}{pt}
\DeclareMathOperator{\FP}{FP}
\DeclareMathOperator{\uFP}{\underline{FP}}
\newcommand*{\longhookrightarrow}{\ensuremath{\lhook\joinrel\relbar\joinrel\rightarrow}}
\newcommand*{\longtwoheadrightarrow}{\ensuremath{\relbar\joinrel\twoheadrightarrow}}
\newcommand*{\RR}{\ensuremath{\mathbb{R}}}
\newcommand*{\ZZ}{\ensuremath{\mathbb{Z}}}
\newcommand*{\NN}{\ensuremath{\mathbb{N}}}
\newcommand*{\QQ}{\ensuremath{\mathbb{Q}}}
\DeclareMathOperator{\ZZF}{\mathbb{Z}_\mathcal{F}}
\DeclareMathOperator{\Sym}{Sym}
\DeclareMathOperator{\vmv}{\vert \mathcal{M} \vert}
\begin{document}

\author{Simon St. John-Green}
\address[Simon St. John-Green]{School of Mathematics, University of Southampton, University Road, Southampton SO17 1BJ, UK \emph{(S.StJohn-Green@soton.ac.uk)}}
\title{Centralisers in Houghton's groups}
 
{\abstract{We calculate the centralisers of elements, finite subgroups and virtually cyclic subgroups of Houghton's group $H_n$.  We discuss various Bredon (co)homological finiteness conditions satisfied by $H_n$ including the Bredon (co)homological dimension and $\uFP_n$ conditions, which are analogs of the ordinary cohomological dimension and $\FP_n$ conditions respectively.
\vskip1em
\noindent \emph{Keywords:} Houghton's Group, Centralisers, Finiteness Conditions.
\vskip1em
\noindent AMS 2010 Mathematics subject classification: Primary 20J05, 20E07
}}

\maketitle

Houghton's group $H_n$ was introduced in \cite{Houghton-FirstCohomologyOfAGroupWithPermModCoeff}, as an example of a group acting on a set $S$ with $H^1(H_n, A \otimes \ZZ[S]) = A^{n-1}$ for any Abelian group $A$.   

In \cite{Brown-FinitenessPropertiesOfGroups}, Brown used an important new technique to show that the Thompson-Higman groups $F_{n,r}, T_{n,r}$ and $V_{n,r}$ were $\FP_\infty$.  In the same paper he showed that Houghton's group $H_n$ is interesting from the viewpoint of cohomological finiteness conditions, namely $H_n$ is $\FP_{n-1}$ but not $\FP_n$.  Thompsons group $F$ was previously shown by different methods to be $\FP_\infty$ \cite{BrownGeoghegan-AnInfiniteDimensionTFFPinftyGroup}, thus providing the first known example of a torsion-free $\FP_\infty$ group with infinite cohomological dimension.

There has been recent interest in the structure of the centralisers of Thompson's groups, in \cite{MartinezNucinkis-GeneralizedThompsonGroups} the centralisers of finite subgroups of generalisations of Thompson's groups $T$ and $V$ are calculated and this data is used to give information about Bredon (co)homological finiteness conditions satisfied by these groups.  The results obtained in \cite[Theorem 4.4, 4.8]{MartinezNucinkis-GeneralizedThompsonGroups} have some similarity with those obtained here.  In \cite{BleakEtAl-CentralizersInVn}, a description of centralisers of elements in the Thompson-Higman group $V_n$ is given.  

In Section \ref{section:review of bredon finiteness} we give the necessary background on Bredon (co)homological finiteness conditions.  Section \ref{section:centralisersOfFiniteSubgroups} contains an analysis of the centralisers of finite subgroups in Houghton's group.  As Corollary \ref{cor:stabOfH is FPn-1 not FPn} we obtain that centralisers of finite subgroups are $\FP_{n-1}$ but not $\FP_n$.  This should be compared with \cite{KochloukovaDessislavaMartinezPerez-CentralisersOfFiniteSubgroupsInSolubleGroups} where examples are given of soluble groups of type $\FP_n$ with centralisers of finite subgroups that are not $\FP_n$, and also with \cite{MartinezPerezNucinkis-VirtuallySolubleGroupsOfTypeFPinfty}, where it is shown that in virtually soluble groups of type $\FP_\infty$ the centralisers of all finite subgroups are of type $\FP_\infty$.

In Section \ref{section:centralisersOfElements} our analysis is extended to arbitrary elements and virtually cyclic subgroups.  Using this information elements in $H_n$ are constructed whose centralisers are $\FP_i$  for any $0 \le i \le n-3$.  In Section \ref{section:Browns model} the space that Brown constructed in \cite{Brown-FinitenessPropertiesOfGroups}, in order to prove that $H_n$ is $\FP_{n-1}$ but not $\FP_n$, is shown to be a model for $\uE H_n$, the classifying space for proper actions of $H_n$.  Finally Section \ref{section:finitenessConditionsSatisfied} contains a discussion of Bredon (co)homological finiteness conditions are satisfied by Houghton's group.  
Namely we show in Proposition \ref{prop:HisnotQuasiFP0} that $H_n$ is not quasi-$\uFP_0$ and in Proposition \ref{prop:cd=gd=n} that the Bredon cohomological dimension and the Bredon geometric dimension with respect to the family of finite subgroups are both equal to $n$.  See Section \ref{section:review of bredon finiteness} for the definitions of quasi-$\uFP_n$ and of Bredon cohomological and geometric dimension.

Fixing a natural number $n >1$, define \emph{Houghton's group} $H_n$ to be the group of permutations of $S = \mathbb{N} \times \{ 1, \ldots, n\}$ which are ``eventually translations'', ie. for any given permutation $h \in H_n$ there are collections $\{z_1, \ldots, z_n\} \in \NN^n$ and $\{m_1, \ldots, m_n\} \in \ZZ^n$ with 
\begin{equation}\label{eventuallyATranslation}
h(i,x) = (i+m_x, x) \text{ for all }x \in \{1, \ldots, n \} \text{ and all }i \ge z_x
\end{equation}

Define a map $\phi$ as follows:
\begin{align}\label{def:phi}
 \phi : H_n &\to \{ (m_1, \ldots, m_n) \in \mathbb{Z}^n \: : \: \sum m_i = 0\} \cong \ZZ^{n-1} \\
 \phi : h &\mapsto (m_1, \ldots, m_n)
\end{align}
Its kernel is exactly the permutations which are ``eventually zero'' on $S$, ie. the infinite symmetric group $\Sym_\infty$ (the finite support permutations of a countable set).  

\begin{Acknowledgements}
 The author would like to thank his supervisor Brita Nucinkis for her encouragement and for many enlightening conversations and Conchita Martinez-Perez for her very helpful comments.  The author would also like to thank the referee for making extremely detailed and useful comments, including suggesting the graph $\Gamma$ used in Section \ref{section:centralisersOfElements}.
\end{Acknowledgements} 

\section{A review of Bredon (co)homological finiteness conditions}
\label{section:review of bredon finiteness}

Throughout this section $G$ is a discrete group and $\mathcal{F}$ is a family of subgroups of $G$ which is closed under taking subgroups and conjugation.  The \emph{orbit category}, denoted $\OFG$, is the small category whose objects are the transitive $G$-sets $G/H$ for $H \in \mathcal{F}$ and whose arrows are all $G$-maps between them.  Any $G$-map $G/H \to G/K$ is determined entirely by the image of the coset $H$ in $G/K$, and $H \mapsto xK$ determines a $G$-map if and only if $ x^{-1}Hx \le K$.

An $\OFG$-module, or \emph{Bredon module}, is a contravariant functor from $\OFG$ to the category of Abelian groups.  As such, the category {\bf $\OFG$-Mod} of $\OFG$ modules is Abelian and exactness is defined pointwise - a short exact sequence 
$$ M^\prime \longrightarrow M \longrightarrow M^{\prime\prime} $$
is exact if and only if 
$$ M^\prime(G/H) \longrightarrow M(G/H) \longrightarrow M^{\prime\prime}(G/H) $$
is exact for all $H \in \mathcal{F}$.  The category of $\OFG$-modules can be shown to have enough projectives.  If $\Omega_1$ and $\Omega_2$ are $G$-sets then we denote by $\ZZ[\Omega_1, \Omega_2]$ the free Abelian group on the set of all $G$-maps $\Omega_1 \to \Omega_2$. If $H \in \mathcal{F}$, the $\OFG$ module $\ZZ[-,G/K]$ defined by 
$$\ZZ[-,G/K](G/H) = \ZZ[G/H,G/K]$$
is free and such modules serve as the building blocks of free $\OFG$-modules.  More precisely, any free $\OFG$-module is a direct sum of such modules.  

An $\OFG$-module $M$ is said to be finitely generated if it admits an epimorphism from some free $\OFG$-module 
$$\bigoplus_{i \in I} \ZZ[-,G/H_i] \longtwoheadrightarrow M$$
with $I$ a finite set.

We denote by $\ZZF$ the $\OFG$-module taking all objects to $\ZZ$ and all arrows to the identity map.  Analogously to ordinary group cohomology we define the Bredon \emph{cohomological dimension} of a $\OFG$-module $M$ to be the shortest length of a projective resolution of $M$ by $\OFG$-modules, and the cohomological dimension of a group $G$ to be the shortest length of a projective resolution of the $\OFG$-module $\ZZF$.  These two integers are denoted $\pdF M$ and $\cdF G$, if $\mathcal{F}=\Fin$ (the family of finite subgroups) then the notation $\ucd G$ is used and if $\mathcal{F} = \VCyc$ (the family of virtually cyclic subgroups) then the notation $\uucd G$ is used.

The Bredon \emph{geometric dimension} of a group $G$, denoted $\gdF G$, is defined to be the minimal dimension of a model for $\EF G$.  Recall that a $G$-CW-complex is a CW-complex with a cellular, rigid $G$-action, where a rigid action is one where the pointwise and setwise stabilisers of all cells coincide.  A model for $\EF G$ is defined to be a $G$-CW-complex $X$ such that 

 $$X^H \simeq \left\{ \begin{array}{l l} \pt & \text{ if } H \in \mathcal{F} \\ \emptyset & \text{ if } H \notin \mathcal{F} \end{array}\right.$$

By an application of the equivariant Whitehead theorem \cite[Theorem 2.4]{Lueck} this is unique up to $G$-homotopy equivalence.  In the case where $\mathcal{F} = \Triv$, the family consisting of only the trivial subgroup, a model for $\E_{\Triv}G$ is the universal cover $\E G$ of an Eilenberg-Mac Lane space $\text{K}(G,1)$.  An $n$-dimensional model for $\EF G$ gives rise to a free resolution of $\OFG$-modules $C_*$ by setting $C_n(G/H) = K_n(X^H)$, where $K_n$ denotes the cellular chain complex of a CW-complex.  Immediately we deduce that $\cdF G \le \gdF G$.  

A theorem of L\"uck and Meintrup gives an inequality in the other direction:
\begin{Theorem}\cite[Theorem 0.1]{LuckMeintrup-UniversalSpaceGrpActionsCompactIsotropy}  $\gd_\mathcal{F} G \le \max \{ \cd_\mathcal{F} G, 3 \}$
\end{Theorem}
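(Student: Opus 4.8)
The plan is to adapt the classical Eilenberg--Ganea argument to the abelian category of $\OFG$-modules, exploiting the correspondence between free $\OFG$-modules $\ZZ[-,G/H]$ and equivariant cells $G/H \times D^k$, and the fact (recalled above) that an $m$-dimensional model for $\EF G$ gives a free $\OFG$-resolution of $\ZZF$ of length $m$. We may assume $n := \cd_\mathcal{F} G < \infty$, since otherwise the right-hand side is infinite and there is nothing to prove; put $d := \max\{n,3\}$, so $d \ge 3$ and $d \ge n$. It suffices to construct a $d$-dimensional model for $\EF G$.

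First I would fix any model $Y$ for $\EF G$ --- one exists in general --- of possibly infinite dimension, and pass to its $(d-1)$-skeleton $Y^{(d-1)}$. For $H \notin \mathcal{F}$ we have $(Y^{(d-1)})^H = (Y^H)^{(d-1)} = \emptyset$, and for $H \in \mathcal{F}$ the complex $(Y^{(d-1)})^H = (Y^H)^{(d-1)}$ is the $(d-1)$-skeleton of the contractible complex $Y^H$, hence $(d-2)$-connected and, since $d-1 \ge 2$, simply connected. The cellular chain complex $\underline{C}_*(Y)$ is a free resolution of $\ZZF$, and the $\OFG$-module $\underline{M} := \ker\!\big(\underline{C}_{d-1}(Y) \to \underline{C}_{d-2}(Y)\big) = \underline{H}_{d-1}(Y^{(d-1)})$ is a $d$-th syzygy of $\ZZF$; since $d \ge n = \pdF \ZZF$, a dimension shift gives $\ExtF^{j}(\underline{M}, -) \cong \ExtF^{j+d}(\ZZF, -) = 0$ for all $j \ge 1$, so $\underline{M}$ is projective. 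By the Hurewicz theorem applied on each fixed point set, $\underline{M}$ is also the $\OFG$-module $G/H \mapsto \pi_{d-1}\big((Y^H)^{(d-1)}\big)$.

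The heart of the proof is the equivariant Eilenberg--Ganea trick. Choose an $\OFG$-module $\underline{N}$ with $\underline{M} \oplus \underline{N}$ free, and form $Y'$ from $Y^{(d-1)}$ by attaching, for every free summand $\ZZ[-,G/H]$ of $\underline{N}$, an equivariant $(d-1)$-cell $G/H \times D^{d-1}$ along a constant map. Then $Y'$ is a $(d-1)$-dimensional $G$-CW-complex, still with $(Y')^H = \emptyset$ for $H \notin \mathcal{F}$ and $(d-2)$-connected for $H \in \mathcal{F}$ (here $d \ge 3$ is used so that wedging on $(d-1)$-spheres adds a free summand to $\pi_{d-1}$ without affecting lower homotopy), and now $G/H \mapsto \pi_{d-1}\big((Y')^H\big)$ is the free module $\underline{M} \oplus \underline{N}$. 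Using equivariant obstruction theory I would then attach equivariant $d$-cells to $Y'$ realising the identity automorphism of this free module --- a basis element of a summand $\ZZ[-,G/H]$ being attached along an equivariant map $G/H \times S^{d-1} \to Y'$ representing the corresponding class of $\pi_{d-1}((Y')^H)$ --- obtaining a $d$-dimensional $G$-CW-complex $Z$. No cell of $Z$ has stabiliser outside $\mathcal{F}$, so $Z^H = \emptyset$ for $H \notin \mathcal{F}$; and for $H \in \mathcal{F}$ the complex $Z^H$ is obtained from the $(d-2)$-connected $(Y')^H$ by attaching $d$-cells along a generating set of $\pi_{d-1}\big((Y')^H\big)$, hence is $(d-1)$-connected, while $H_d(Z^H) = \ker\!\big(K_d(Z^H) \xrightarrow{\ \cong\ } H_{d-1}((Y')^H) \hookrightarrow K_{d-1}((Y')^H)\big) = 0$. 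A nonempty $d$-dimensional $(d-1)$-connected CW-complex with vanishing $d$-th homology is contractible, so $Z^H \simeq \pt$. Thus $Z$ is a $d$-dimensional model for $\EF G$ and $\gd_\mathcal{F} G \le d = \max\{\cd_\mathcal{F} G, 3\}$.

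I expect the main obstacle to be the bookkeeping in the equivariant Eilenberg--Ganea trick: one must verify that homomorphisms between free and projective $\OFG$-modules are realised by attaching equivariant cells coherently across all fixed point sets simultaneously, and that the Hurewicz theorem and the criterion ``$(d-1)$-connected with vanishing $d$-th homology implies contractible'' may be applied fixed-point-set by fixed-point-set (which needs the $Y^H$ to have the homotopy type of CW-complexes --- automatic for rigid $G$-CW-complexes). In the non-equivariant case these facts are classical; the content here is that Bredon's obstruction theory, together with the dictionary $\ZZ[-,G/H] \leftrightarrow G/H \times D^k$, lets the classical argument run essentially verbatim.
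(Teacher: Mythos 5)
This theorem is quoted from L\"uck--Meintrup and the paper supplies no proof of its own, so there is nothing internal to compare against; your argument is essentially the proof in the cited source, i.e.\ the equivariant Eilenberg--Ganea argument run in $\OFG$-Mod (truncate a model for $\EF G$ at the $(d-1)$-skeleton, observe that the $d$-th syzygy of $\ZZF$ is projective, stabilise it to a free module by wedging on orbits of $(d-1)$-spheres, and kill $\pi_{d-1}$ by attaching equivariant $d$-cells), and it is correct in outline, with $d\ge 3$ entering exactly where you say it does. The one step you gloss over is that a complement $\underline{N}$ of the projective module $\underline{M}$ inside a free module need not itself be free, so you cannot yet speak of ``every free summand $\ZZ[-,G/H]$ of $\underline{N}$''; this is repaired by the Eilenberg swindle, replacing $\underline{N}$ by $\underline{N}\oplus(\underline{M}\oplus\underline{N})^{(\infty)}$, which is free and is still a complement of $\underline{M}$ in a free module. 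With that standard adjustment the proof goes through as you describe.
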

If $\mathcal{F} = \Fin$ we denote the geometric dimension by $\ugd G$, and if $\mathcal{F} = \VCyc$, by $\uugd G$.  Dunwoody has shown that $\ucd G = 1$ implies that $\ugd G = 1$ \cite{Dunwoody-AccessabilityAndGroupsOfCohomologicalDimensionOne}, hence $\ucd G = \ugd G$ unless $\ucd G = 2$ and $\ugd G = 3$.  Brady, Leary and Nucinkis show in \cite{BradyLearyNucinkis-AlgAndGeoDimGroupsWithTorsion} that this can indeed happen.

There are many groups for which good models for $\uE G$ are known, \cite{Luck-SurveyOnClassifyingSpaces} is a good reference.  

The $\uFP_n$-conditions are natural generalisations of the $\FP_n$ conditions of ordinary group cohomology.  An $\OFG$-module $M$ is $\uFP_n$ if it admits a projective resolution by $\mathcal{O}_{\Fin} G$-modules, which is finitely generated in all dimensions $\le n$.  A group $G$ is $\uFP_n$, if $\ZZ_{\Fin}$ is $\uFP_n$.  

The following lemma details an alternative algebraic description of the condition $\uFP_n$ which is easier to calculate.

\begin{Proposition}
\cite[Lemma 3.1, Lemma 3.2]{KMN-CohomologicalFinitenessConditionsForElementaryAmenable}
\begin{enumerate}
 \item $G$ is $\uFP_0$ if and only if $G$ has finitely many conjugacy classes of finite subgroups.
 \item An $\OFG$-module $M$ is $\uFP_n$ ($n \ge 1$) if and only if $G$ is $\uFP_0$ and $M(G/K)$ is of type $\FP_n$ over the Weyl group $WK = N_GK/ K$ for all finite subgroups $K \le G$.
\end{enumerate}
\end{Proposition}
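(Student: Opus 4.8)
The plan is to reduce both parts to the Yoneda isomorphism $M(G/K)\cong\Hom_{\mathcal{O}_{\Fin}G}(\ZZ[-,G/K],M)$, natural in $M$ — which in particular equips $M(G/K)$ with an action of $\Aut_{\mathcal{O}_{\Fin}G}(G/K)\cong N_GK/K=WK$ — together with the description of free $\mathcal{O}_{\Fin}G$-modules recalled above. For (1), since every object of $\mathcal{O}_{\Fin}G$ is a $G/H$ with $H$ finite, $\ZZ_{\Fin}$ being finitely generated means there is an epimorphism $\bigoplus_{i=1}^{k}\ZZ[-,G/H_i]\to\ZZ_{\Fin}$ with all $H_i$ finite. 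Evaluating at $G/K$ and using that $\ZZ[G/K,G/H_i]\ne 0$ exactly when $K$ is conjugate into $H_i$, such an epimorphism exists if and only if every finite $K\le G$ is conjugate into one of finitely many finite subgroups; and since a finite group has only finitely many subgroups, this is equivalent to $G$ having finitely many conjugacy classes of finite subgroups. One direction is immediate from a choice of conjugacy-class representatives, and for the other one sends each free generator to $1\in\ZZ$.

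The substance of (2) lies in the following lemma: \emph{if $G$ is $\uFP_0$, then an $\mathcal{O}_{\Fin}G$-module $N$ is finitely generated if and only if $N(G/K)$ is finitely generated over $\ZZ WK$ for every finite $K\le G$}. The forward implication uses that evaluation $E_K\colon N\mapsto N(G/K)$ is exact and sends finitely generated modules to finitely generated $\ZZ WK$-modules, because it does so on the free building blocks: $E_K(\ZZ[-,G/H])=\ZZ[G/K,G/H]$ is the permutation module on the set of $G$-maps $G/K\to G/H$, whose $WK$-orbits are indexed by the double cosets in $N_GK\backslash X/H$ with $X=\{g\in G: g^{-1}Kg\le H\}$; as $X$ is a finite union of left cosets $N_GK\cdot g$, one for each subgroup of $H$ that is $G$-conjugate to $K$, there are finitely many orbits, and the stabiliser of the orbit through $g$ is the finite group $(N_GK\cap gHg^{-1})/K$. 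The reverse implication is where $\uFP_0$ is used: fix conjugacy-class representatives $K_1,\dots,K_m$ of the finite subgroups of $G$ and, for each $j$, finitely many $\ZZ WK_j$-module generators of $N(G/K_j)$; by Yoneda these are morphisms $\alpha_{j,l}\colon\ZZ[-,G/K_j]\to N$, and $\bigoplus_{j,l}\alpha_{j,l}$ is an epimorphism onto $N$, because an arbitrary finite $L\le G$ is conjugate to some $K_j$, so $G/L\cong G/K_j$ as $G$-sets, while the image of $\alpha_{j,l}$ at $G/K_j$ contains the cyclic $\ZZ WK_j$-submodule generated by the corresponding element, and these sweep out $N(G/K_j)$.

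Granting this lemma, (2) follows by dimension shifting. First, each $\ZZ[G/K,G/H]$ is in fact of type $\FP_\infty$ over $\ZZ WK$: it is a finite direct sum of modules $\ZZ WK\otimes_{\ZZ S}\ZZ$ with $S$ finite, and $\ZZ$ is of type $\FP_\infty$ over the Noetherian ring $\ZZ S$. If $M$ is $\uFP_n$, choose a projective resolution $P_\ast\to M$ with $P_i$ finitely generated free for $i\le n$; applying the exact functor $E_K$ produces a resolution of $M(G/K)$ by $\ZZ WK$-modules of type $\FP_\infty$ in each degree $\le n$, so $M(G/K)$ is of type $\FP_n$ by the standard properties of the $\FP_n$ conditions, while $G$ is $\uFP_0$ by hypothesis (and is in any case forced by (1) whenever $M(G/K)\ne 0$ for all $K$, as for $M=\ZZ_{\Fin}$). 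Conversely, assuming $G$ is $\uFP_0$ and each $M(G/K)$ of type $\FP_n$, the lemma yields a finitely generated free $P_0\to M$; applying $E_K$ to $0\to M_1\to P_0\to M\to 0$ and using that the kernel of a map from a module of type $\FP_\infty$ onto a module of type $\FP_n$ is of type $\FP_{n-1}$ shows each $M_1(G/K)$ is of type $\FP_{n-1}$, and iterating to build $P_0,\dots,P_n$ gives the required resolution.

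The main obstacle is the reverse implication of the lemma: converting the finiteness of the set of conjugacy classes of finite subgroups into an honest finite generating set for $N$ as a contravariant functor, which reduces to checking pointwise surjectivity at every $G/L$ by conjugating $L$ into one of the chosen representatives. Everything else — exactness of $E_K$ and its effect on free modules, and the homological bookkeeping for the $\FP_n$ conditions — is formal.
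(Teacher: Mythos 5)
This proposition is not proved in the paper at all --- it is quoted verbatim from Kropholler--Mart\'inez-P\'erez--Nucinkis, so there is no in-paper argument to compare against. Your proof is correct and is essentially the standard one from that reference: Yoneda plus the identification of $\ZZ[G/K,G/H]$ as a finite sum of permutation modules $\ZZ[WK/S]$ with $S$ finite (via the double cosets $N_GK\backslash\{g:g^{-1}Kg\le H\}/H$), the characterisation of finite generation of a Bredon module by finite generation of its evaluations over the Weyl groups, and routine dimension shifting. The one wrinkle, which you correctly flag, is that the literal forward implication ``$M$ is $\uFP_n$ $\Rightarrow$ $G$ is $\uFP_0$'' fails for degenerate $M$ such as $M=0$; your observation that it does hold whenever every $M(G/K)$ is nonzero covers the only case the paper uses, namely $M=\ZZ_{\Fin}$ in Corollary \ref{cor:uFP_n equivalent conditions}.
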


\begin{Corollary}\label{cor:uFP_n equivalent conditions}
The following are equivalent for a group $G$:
\begin{enumerate}
 \item $G$ is $\uFP_n$.
 \item $G$ is $\uFP_0$ and the Weyl groups $WK = N_GK/K$ are $\FP_n$ for all finite subgroups $K$.
 \item $G$ is $\uFP_0$ and the centralisers $C_G K$ are $\FP_n$ for all finite subgroups $K$.
\end{enumerate}
\end{Corollary}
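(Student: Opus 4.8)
The plan is to reduce everything to the cited Proposition together with two standard permanence properties of the ordinary $\FP_n$ condition: invariance under passage between a group and a subgroup of finite index, and invariance under passage between a group and its quotient by a finite normal subgroup.

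First I would establish $(1) \Leftrightarrow (2)$. For $n = 0$ there is nothing to check, since every group is $\FP_0$ and the condition on Weyl groups is vacuous; so assume $n \ge 1$ and apply part (2) of the Proposition with $M = \ZZF$. As $\ZZF(G/K) = \ZZ$ carries the trivial $WK$-action, it is of type $\FP_n$ as a $\ZZ WK$-module precisely when the group $WK$ is $\FP_n$. Hence $\ZZF$ (equivalently $G$) is $\uFP_n$ if and only if $G$ is $\uFP_0$ and every Weyl group $WK$ is $\FP_n$, which is exactly (2).

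Next I would prove $(2) \Leftrightarrow (3)$. Both statements carry the hypothesis that $G$ is $\uFP_0$, so it suffices to show, for each finite subgroup $K$, that $WK = N_G K / K$ is $\FP_n$ if and only if $C_G K$ is $\FP_n$. The key observation is that $C_G K$ and $WK$ are commensurable up to a finite kernel: the subgroup $Z(K) = C_G K \cap K$ is finite and central in $C_G K$, the image of $C_G K$ under $N_G K \twoheadrightarrow WK$ is $C_G K / Z(K)$, and this image has finite index in $WK$ because $N_G K / C_G K$ embeds into the finite group $\Aut(K)$. Therefore $C_G K$ is $\FP_n$ $\iff$ $C_G K / Z(K)$ is $\FP_n$ (quotient by a finite normal subgroup) $\iff$ $WK$ is $\FP_n$ (finite-index subgroup), closing the cycle.

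I do not anticipate a genuine obstacle. The only point requiring a little care is the elementary bookkeeping identifying the image of $C_G K$ inside $WK$ and verifying the two finiteness assertions, namely $|Z(K)| < \infty$ and $[WK : C_G K / Z(K)] < \infty$; both follow at once from finiteness of $K$ and of $\Aut(K)$. I would also make sure to cite explicitly the two $\FP_n$ permanence results used in the last step, as these are the only ingredients beyond the quoted Proposition.
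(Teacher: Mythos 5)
Your proposal is correct and takes essentially the same route as the paper: $(1)\Leftrightarrow(2)$ is read off from the quoted Proposition applied to $\ZZF$, and $(2)\Leftrightarrow(3)$ is obtained from the two standard $\FP_n$ permanence properties (finite-index subgroups and quotients by finite normal subgroups), both ultimately resting on the finiteness of $K$ and of $N_GK/C_GK\hookrightarrow\Aut(K)$. The only (cosmetic) difference is the intermediate group used for the comparison: the paper passes through $N_GK$, in which $C_GK$ has finite index and which surjects onto $WK$ with finite kernel $K$, whereas you pass through the image $C_GK/(C_GK\cap K)$ sitting with finite index inside $WK$.
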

\begin{proof}
 By the previous proposition (1) and (2) are equivalent.  To see the equivalence of (2) and (3) consider the short exact sequence
$$0 \longrightarrow K \longrightarrow N_G K  \longrightarrow WK \longrightarrow 0 $$
$K$ is finite and hence $\FP_\infty$, so $WK$ is $\FP_n$ if and only if $N_G K$ is $\FP_n$ \cite[Proposition 2.7]{Bieri-HomDimOfDiscreteGroups}.  $K$ is finite, so $C_G K$ is finite index in $N_G K $ \cite[1.6.13]{Robinson} and as such $C_G K$ is $\FP_n$ if and only if $N_G K$ is $\FP_n$.  Hence $WK$ is $\FP_n$ if and only if $C_G K$ is $\FP_n$.
\end{proof}

The condition that a group $G$ has only finitely many conjugacy classes of finite subgroups is extremely strong, in \cite{MartinezNucinkis-GeneralizedThompsonGroups} the weaker condition quasi-$\uFP_n$ is introduced.  

\begin{definition}
\begin{enumerate}
 \item $G$ is quasi-$\uFP_0$  if and only if there are finitely many conjugacy classes of finite subgroups isomorphic to a given finite subgroup.
 \item $G$ is quasi-$\uFP_n$ if and only if $G$ is quasi-$\uFP_0$ and $WK$ is $\FP_n$ for every finite $K \le G$.
\end{enumerate}
\end{definition}

Many results about finiteness in ordinary group cohomology carry over into the Bredon case, for example in in \cite[Section 5]{MartinezNucinkis-GeneralizedThompsonGroups}, versions of the Bieri-Eckmann criterion for both $\uFP_n$ and quasi-$\uFP_n$ $\OFG$-modules are shown to hold (see \cite[Section 1.3]{Bieri-HomDimOfDiscreteGroups} for the classical case).

\section{Centralisers of finite subgroups in \texorpdfstring{$H_n$}{Hn}}\label{section:centralisersOfFiniteSubgroups}
First we recall some properties of group actions on sets, before specialising to Houghton's group.
\begin{proposition}\label{prop:groupActionsOnSets}
 If $G$ is a group acting on a countable set $X$ and $H$ is any subgroup of $G$ then
\begin{enumerate}
 \item If $x$ and $y$ are in the same $G$-orbit then their isotropy subgroups $G_x$ and $G_y$ are $G$-conjugate.
 \item If $g \in C_G(H)$ then $H_{gx} = H_x$ for all $x \in X$.
 \item Partition $X$ into $\{X_a\}_{a=1}^t$, where $t \in \NN \cup \{\infty\}$, via the equivalence relation $x \sim y$ if and only if $H_x$ is $H$-conjugate to $H_y$.  Any two points in the same $H$-orbit will lie in the same partition and any $c \in C_G(H)$ maps $X_a$ onto $X_a$ for all $a$.
 \item Let $G$ act faithfully on $X$, with the property that for all $g \in G$ and $X_a \subseteq X$ as in the previous section, there exists a group element $g_a \in G$ which fixes $X \setminus X_a$ and acts as $g$ does on $X_a$.  Then $C_G(H) = C_1 \times \cdots \times C_t$ where $C_a$ is the subgroup of $C_G(H)$ acting trivially on $X \setminus X_a$.
\end{enumerate}
\end{proposition}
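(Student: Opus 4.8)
The plan is to treat the four parts in order, each feeding into the next, with essentially all of the real work concentrated in part~(4).

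Parts~(1) and~(2) are direct computations with stabilisers. For~(1), if $y = gx$ then $h \in G_y$ is equivalent to $g^{-1}hg \in G_x$, so $G_y = gG_xg^{-1}$ and $G_x$, $G_y$ are $G$-conjugate. For~(2), if $g \in C_G(H)$ and $h \in H$ then $hgx = ghx$, and since $g$ is a bijection of $X$ this equals $gx$ precisely when $hx = x$; thus $h \in H_{gx}$ if and only if $h \in H_x$, i.e. $H_{gx} = H_x$.

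For part~(3) I would apply part~(1) to the action of the subgroup $H$ on $X$: two points in one $H$-orbit have $H$-conjugate isotropy groups, hence lie in the same block $X_a$, and in particular each $X_a$ is $H$-invariant. If $c \in C_G(H)$ then part~(2) gives $H_{cx} = H_x$, so $cx \sim x$ and $c$ maps each $X_a$ into itself; running the same argument for $c^{-1} \in C_G(H)$ shows $c$ maps $X_a$ \emph{onto} $X_a$.

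For part~(4) I would show that the subgroups $C_a \le C_G(H)$ exhibit $C_G(H)$ as an internal direct product. Fix $c \in C_G(H)$. By part~(3), $c$ restricts to a permutation of each $X_a$, so the hypothesis on $G$ supplies an element $c_a \in G$ acting as $c$ does on $X_a$ and fixing $X \setminus X_a$ pointwise; by faithfulness of the action $c_a$ is unique. The crucial step is to check $c_a \in C_G(H)$. Let $h \in H$. For $x \in X_a$ we have $hx \in X_a$ because $X_a$ is $H$-invariant, so $c_a(hx) = c(hx) = h(cx) = h(c_ax)$, using $c \in C_G(H)$ and $c_a = c$ on $X_a$; for $x$ in a different block $X_b$, again $hx \in X_b$, and both $c_a(hx)$ and $h(c_ax)$ equal $hx$ since $c_a$ fixes $X_b$ pointwise. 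Hence $c_ah = hc_a$, so $c_a \in C_a$. The $c_a$ pairwise commute, since they are supported on the pairwise disjoint blocks $X_a$ and each preserves its own block; $c$ is recovered as the product of its $c_a$ by faithfulness (a finite product in the cases of interest); each $C_a$ is normal in $C_G(H)$ because $C_G(H)$ permutes the blocks; and if $a \neq b$ then $C_a$ fixes $X_b$ pointwise while every element of $\langle C_b : b \neq a \rangle$ fixes $X_a$ pointwise, so $C_a \cap \langle C_b : b \neq a \rangle$ acts trivially on $X$ and is therefore trivial. Together these facts give $C_G(H) = C_1 \times \cdots \times C_t$.

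I expect the verification that $c_a \in C_G(H)$ to be the main obstacle: $c_a$ is a priori merely some abstract element handed over by the hypothesis, and the argument genuinely needs the $H$-invariance of the blocks established in part~(3). A secondary point requiring care is the case $t = \infty$, where one must know that every $c \in C_G(H)$ has support meeting only finitely many blocks for the internal direct product to be read literally; in the situations to which this proposition is applied $H$ is finite and acts with finite support, so all but one block has trivial isotropy and $t$ is finite, which settles the issue.
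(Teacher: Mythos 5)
Your proof is correct and follows the same route as the paper's, which treats (1) and (2) as standard, derives (3) from them, and proves (4) by decomposing $c \in C_G(H)$ into the elements $c_a$ supplied by the hypothesis and using faithfulness of the action to get uniqueness, commutativity and the factorisation $c = c_1 \cdots c_t$. You actually supply more detail than the paper: the verification that each $c_a$ centralises $H$ (which genuinely needs the $H$-invariance of the blocks from (3)) and the caveat about $t = \infty$ are both left implicit there.
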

\begin{proof}(1) and (2) are standard results.
\begin{enumerate}
 \setcounter{enumi}{2} 
\item This follows immediately from (1) and (2).
 \item This follows from (3) and our new assumption on $G$:  Let $c \in C_G(H)$ and $c_a$ be the element given by the assumption.  Since the action of $G$ on $X$ is faithful, $c_a$ is necessarily unique.  That the action is faithful also implies $c = c_1 \cdots c_t$ and that any two $c_a$ and $c_b$ commute in $G$ because they act non-trivially only on distinct $X_a$.  Thus we have the necessary isomorphism $C_G(H) \longrightarrow C_1 \times \cdots \times C_t$.
\end{enumerate}
\end{proof}
Let $Q \le H_n$ be a finite subgroup of Houghton's group $H_n$ and $S_Q  = S \setminus S^Q$ the set of points of $S$ which are \emph{not fixed} by $Q$.  $Q$ being finite implies $\phi(Q) = 0$ as any element $q$ with $\phi(q) \neq 0$ necessarily has infinite order.  For every $q \in Q$ there exists $\{z_1, \ldots, z_n \} \in \NN^n$ such that
$$
q(i,x) = (i, x) \text{ if } i \ge z_x
$$
Taking $z_x^\prime$ to be the maximum of these $z_x$ over all elements in $Q$, then $Q$ must fix the set $\{(i,x)\: : \: i \ge z^\prime_x\}$ and in particular $S_Q \subseteq \{(i,x)\: : \: i < z^\prime_x\}$ is finite.

We need to see that the subgroup $Q \le H_n$ acting on the set $S$ satisfies the conditions of Proposition \ref{prop:groupActionsOnSets}(4).  We give the following lemma in more generality than is needed here, as it will come in useful later on.  That the action is faithful is automatic as an element $h \in H_n$ is uniquely determined by its action on the set $S$.
\begin{lemma}\label{lemma:prop conditions are satisfied}
 Let $Q \le H_n$ be a subgroup, which is either finite or of the form $F \rtimes \ZZ$ for $F$ a finite subgroup of $H_n$.  Partition $S$ with respect to $Q$  into sets $\{S_a\}_{a = 1}^t$ as in Proposition \ref{prop:groupActionsOnSets}(3) applied to the action of $H_n$ on $S$ and the subgroup $Q$ of $H_n$.  Then the conditions of Proposition \ref{prop:groupActionsOnSets}(4) are satisfied.
\end{lemma}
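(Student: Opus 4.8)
The plan is to reduce the whole verification to one observation: along each ray $\NN \times \{x\} \subseteq S$ the isotropy group $Q_{(i,x)}$ is independent of $i$ once $i$ is large. Granting this, fix $g \in H_n$ stabilising every part $S_a$ of the partition setwise (by Proposition \ref{prop:groupActionsOnSets}(3) this includes every $g \in C_{H_n}(Q)$, which is all that the proof of Proposition \ref{prop:groupActionsOnSets}(4) actually invokes), and fix a part $S_a$. Let $g_a$ be the permutation of $S$ agreeing with $g$ on $S_a$ and fixing $S \setminus S_a$ pointwise; it is a well-defined bijection because $g(S_a) = S_a$. To see $g_a \in H_n$ we check it is eventually a translation on each ray. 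By the observation the points $(i,x)$ with $i$ large all lie in a single part, so on a given ray either $(i,x) \in S_a$ for all large $i$ -- then $g_a(i,x) = g(i,x)$ for all large $i$, and $g_a$ is eventually the same translation on that ray as $g$ -- or $(i,x) \notin S_a$ for all large $i$ -- then $g_a$ fixes that ray for large $i$. Either way $g_a$ is eventually a translation, hence $g_a \in H_n$.

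It thus remains to prove that $Q_{(i,x)}$ is eventually constant in $i$ on each ray. When $Q$ is finite this is exactly the content of the paragraph preceding the lemma: for $i \ge z'_x$ the whole group $Q$ fixes $(i,x)$, so $Q_{(i,x)} = Q$ for all large $i$ on every ray, and the tail of every ray lies in the single part $S^Q$; the parts other than $S^Q$ are contained in the finite set $S_Q$, and on such a part $g_a$ extends by the identity to an element of $\Sym_\infty \le H_n$.

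Now let $Q = F \rtimes \ZZ$ and let $\tau$ generate the $\ZZ$ factor. Since $\tau$ has infinite order, $\phi(\tau) = (m_1, \ldots, m_n) \ne 0$; split the rays according to whether $m_x = 0$ or $m_x \ne 0$, the latter set being nonempty. On a ray with $m_x = 0$, once $i$ exceeds the eventual thresholds of $\tau$ and of every element of $F$, each element of $Q$ fixes $(i,x)$, so $Q_{(i,x)} = Q$ there. On a ray with $m_x \ne 0$ I claim $Q_{(i,x)} = F$ for $i$ large: for such $i$ the $\langle \tau \rangle$-orbit of $(i,x)$ is infinite, so $Q_{(i,x)}$ has infinite index in $Q$; a subgroup of infinite index in the virtually cyclic group $Q$ is finite; a finite subgroup of $F \rtimes \ZZ$ lies in $F$, the quotient being torsion-free; and $F$ fixes $(i,x)$ for $i$ large. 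Hence each ray again has eventually constant isotropy, the rays with $m_x = 0$ eventually entering the part $S^Q$ and those with $m_x \ne 0$ the part with isotropy $F$, and every remaining part is contained in the finite complement of all these ray-tails and is handled as in the finite case.

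The step I expect to be the main obstacle is the equality $Q_{(i,x)} = F$ for large $i$ on the rays with $m_x \ne 0$ -- that is, ruling out an element $f\tau^{k}$ with $k \ne 0$ in such an isotropy group. The infinite-orbit plus virtual-cyclicity argument above is the clean route; working directly with normal forms $f\tau^{k}$ is unpleasant because the threshold past which $f\tau^{k}$ acts as a genuine translation on a ray depends on $k$ and so does not mesh with a fixed index $i$.
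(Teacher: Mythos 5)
Your proof is correct and follows essentially the same route as the paper: show that the tail of each ray $\NN\times\{x\}$ eventually has constant isotropy (namely $Q$ or, on translated rays, $F$), so each part $S_a$ either contains or misses every ray tail, whence $g_a$ is eventually a translation or eventually the identity on each ray. The only real difference is that where the paper simply asserts that the isotropy of the tail points is $F$ or $Q$, you justify the $F$ case via the general fact that an infinite-index subgroup of a virtually cyclic group is finite; that is a clean and valid way to fill in that step.
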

\begin{proof}
Fix $a \in \{1, \ldots, t\}$ and let $h_a$ denote the permutation of $S$ which fixes $S \setminus S_a$ and acts as $h$ does on $S_a$.  We wish to show that $h_a$ is an element of $H_n$.

There are only finitely many elements in $Q$ with finite order so as in the argument just before this lemma we may choose integers $z_x$ for $x \in \{1, \ldots, n\}$ such that if $q$ is a finite order element of $Q$ then $q(i,x) = (i,x)$ whenever $i \ge z_x$.  If $Q$ is a finite group then either:
\begin{itemize}
 \item $S_a$ is fixed by $Q$, in which case 
$$\{(i,x) \: : \: i \ge z_x \: , \: x \in \{1, \ldots, n \} \} \subseteq S_a$$
 so $h_a(i,x) = h(i,x)$ for all $i \ge z_x$.  In particular for large enough $i$, $h_a$ acts as a translation on $(i,x)$ and is hence an element of $H_n$.
\end{itemize}
Or
\begin{itemize}
 \item $S_a$ is not fixed by $Q$, in which case 
 $$S_a \subseteq \{(i,x) \: : \: i < z_x \: , \: x \in \{1, \ldots, n \} \} $$
In particular $S_a$ is finite and $h_a(i,x) = (i,x)$ for all $i \ge z_x$.  Hence $h_a$ is an element of $H_n$.
\end{itemize}

It remains to treat the case where $Q= F \rtimes \ZZ$.  Write $w$ for a generator of $\ZZ$ in $F \rtimes \ZZ$.  By choosing a larger $z_x$ if needed we may assume $w$ acts either trivially or as a translation on $(i,x)$ whenever $i \ge z_x$.  Hence for any $x \in \{1, \ldots, n\}$, the isotropy group in $Q$ of $\{(i,x) \: :\: i \ge z_x \}$ is either $F$ or $Q$.  

If $S_a$ has isotropy group $Q$ or $F$ then for some $x \in \{1, \ldots, n\}$, either:
\begin{itemize}
 \item $$S_a \cap \{(i,x) : i \ge z_x\} = \{(i,x) : i \ge z_x\}$$
 In which case $h_a(i,x) = h(i,x)$ for $i \ge z_x$.  In particular for large enough $i$, $h_a$ acts as a translation on $(i,x)$ and hence is an element of $H_n$.
\end{itemize}
Or
\begin{itemize} 
\item $$S_a \cap \{(i,x) : i \ge z_x\} = \emptyset$$
 In which case $h_a(i,x) = (i,x)$ for $i \ge z_x$.  In particular for large enough $i$, $h_a$ fixes $(i,x)$ and hence is an element of $H_n$.
\end{itemize}
If $S_a$ is the set corresponding to an isotropy group not equal to $F$ or $Q$ then 
$$S_a \subseteq \{ (i,x) \: : \: i \ge z_x \: , \: x \in \{1, \ldots, n\} \}$$
So $h_a$ fixes $(i,x)$ for $i \ge z_x$ and hence $h_a$ is an element of $H_n$.
\end{proof}

Partition $S$ into disjoint sets according to the $Q$-conjugacy classes of the stabilisers, as in Proposition \ref{prop:groupActionsOnSets}(3).  The set with isotropy in $Q$ equal to $Q$ is $S^Q$ and since $S_Q$ is finite the partition is finite, thus
$$ S = S^Q \cup S_1 \cup \cdots \cup S_t$$
Proposition \ref{prop:groupActionsOnSets}(4) gives that  
$$C_{H_n}(Q) = H_n|_{S^Q} \times C_1 \times \ldots \times C_t $$ 
where each $C_a$ acts only on $S_a$ and leaves $S^Q$ and $S_b$ fixed for $a \neq b$ (where $a, b \in \{1, \ldots, t\}$).  The first element of the direct product decomposition is the subgroup of $C_{H_n}(Q)$ acting only on $S^Q$ and leaving $S \setminus S^Q$ fixed.  This is $H_n|_{S^Q}$ ($H_n$ restricted to $S^Q$) because, as the action of $Q$ on $S^Q$ is trivial, any permutation of $S^Q$ will centralise $Q$.  Choose a bijection $S^Q \to S$ such that for all $x$, $(i,x) \mapsto (i + m_x, x)$ for large enough $i$ and some $m_x \in \ZZ$, this induces an isomorphism between $H_n|_{S^Q}$ and $H_n$.

To give an explicit definition of the group $C_a$ we need three lemmas.

\begin{lemma}\label{lemma:Ca iso T}
 $C_a$ is isomorphic to the group $T$ of $Q$-set automorphisms of $S_a$.
\end{lemma}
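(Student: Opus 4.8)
The plan is to construct mutually inverse homomorphisms between $C_a$ and $T$, the group of $Q$-set automorphisms of $S_a$. First I would show that every element $c \in C_a$ restricts to a $Q$-set automorphism of $S_a$. By definition $c$ centralises $Q$, so for every $q \in Q$ and $s \in S_a$ we have $c(qs) = q(cs)$; and $c$ fixes $S \setminus S_a$ pointwise, so $c$ permutes $S_a$ (it is a bijection of $S$ fixing the complement of $S_a$, hence restricts to a bijection of $S_a$). Therefore $c|_{S_a}$ is a $Q$-equivariant bijection of $S_a$, i.e.\ an element of $T$. This gives a map $\rho : C_a \to T$, $c \mapsto c|_{S_a}$, which is visibly a group homomorphism, and it is injective because an element of $C_a$ is determined by its action on $S_a$ (it fixes everything else).

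Next I would show $\rho$ is surjective, which is where Lemma \ref{lemma:prop conditions are satisfied} does the work. Given a $Q$-set automorphism $\sigma$ of $S_a$, extend it to a permutation $\tilde\sigma$ of $S$ by letting it fix $S \setminus S_a$ pointwise. I must check two things: that $\tilde\sigma \in H_n$, and that $\tilde\sigma$ centralises $Q$. The first is precisely the content of Lemma \ref{lemma:prop conditions are satisfied} together with Proposition \ref{prop:groupActionsOnSets}(4): the set $S_a$ is one of the pieces of the partition of $S$ with respect to $Q$, and the hypothesis of Proposition \ref{prop:groupActionsOnSets}(4) — verified in Lemma \ref{lemma:prop conditions are satisfied} for this very $Q$ — says exactly that any permutation of $S_a$ extends by the identity to an element of $H_n$. (Here I should be slightly careful: Lemma \ref{lemma:prop conditions are satisfied} is phrased for $h \in H_n$ and the induced $h_a$; but its proof only uses that $h_a$ fixes or acts-as-a-translation on each tail $\{(i,x) : i \ge z_x\}$, and this holds for an arbitrary permutation of the finite or tail-respecting piece $S_a$, so the conclusion applies to $\tilde\sigma$ as well. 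I would remark on this.) For the second point, $\tilde\sigma$ centralises $Q$: on $S \setminus S_a$ both $\tilde\sigma$ and each $q \in Q$ act as permutations preserving $S \setminus S_a$, and since $\tilde\sigma$ is the identity there, $\tilde\sigma q = q = q\tilde\sigma$ on that set; on $S_a$, $\tilde\sigma$ is $Q$-equivariant by assumption and $Q$ preserves $S_a$ (as $S_a$ is a union of $Q$-orbits, by Proposition \ref{prop:groupActionsOnSets}(3)), so $\tilde\sigma q = q\tilde\sigma$ there too. Hence $\tilde\sigma \in C_{H_n}(Q)$ and fixes $S \setminus S_a$, so $\tilde\sigma \in C_a$ and $\rho(\tilde\sigma) = \sigma$.

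The main obstacle, such as it is, is the surjectivity step, and specifically making sure the membership $\tilde\sigma \in H_n$ is justified cleanly — one needs $S_a$ to be either finite or to meet each tail $\{(i,x): i \ge z_x\}$ in either all of it or none of it, which is exactly what the case analysis in Lemma \ref{lemma:prop conditions are satisfied} establishes. Once that is in hand the argument is formal: $\rho$ is an injective homomorphism onto $T$, hence an isomorphism $C_a \cong T$. I would close by noting that everything here is an instance of the general principle that the centraliser of $Q$ in the full symmetric group of a $Q$-set is the product of the automorphism groups of the isotypic components, restricted to those permutations lying in $H_n$ — and the latter restriction is automatically satisfied on each $S_a$ by the lemma.
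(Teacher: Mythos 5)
Your overall strategy is the same as the paper's: restrict to get an injective homomorphism $C_a \to T$, then extend a $Q$-set automorphism of $S_a$ by the identity on $S \setminus S_a$ and check it lands in $C_a$. For the lemma as stated this is correct, but your justification of the key membership $\tilde\sigma \in H_n$ is both more complicated than necessary and contains a false aside. In the context of this lemma $Q$ is finite, so each $S_a$ with $a \in \{1,\dots,t\}$ is contained in the finite set $S_Q$ and is itself finite; hence $\tilde\sigma$ has finite support, is eventually the identity on every ray, and lies in $\Sym_\infty \le H_n$. That one sentence is the paper's entire argument, and it does not need Lemma \ref{lemma:prop conditions are satisfied} at all (that lemma extends restrictions of elements of $H_n$, not arbitrary $Q$-set automorphisms, so it is not directly applicable here).

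Your parenthetical claim that the argument also works for a ``tail-respecting'' infinite piece $S_a$ is wrong. If $S_a$ contains a tail $\{(i,x) : i \ge z_x\}$ on which $Q$ acts trivially, then $Q$-equivariance places no constraint on how $\tilde\sigma$ permutes those infinitely many points, so a generic $Q$-set automorphism of $S_a$ is not eventually a translation and does not lie in $H_n$. The paper makes exactly this point in Section \ref{section:centralisersOfElements}: Lemma \ref{lemma:Ca iso T} fails for the infinite piece $S_0$, which is why $C_0$ has to be analysed separately there (and turns out to be $\ZZ^r$ rather than the full $Q$-set automorphism group). So you should delete the ``or tail-respecting'' escape hatch and simply record that $S_a$ is finite.
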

\begin{proof}
 An element $c \in C_a$ determines a $Q$-set automorphism of $S_a$, giving a map $C_a \to T$.  Since the action of $C_a$ on $S_a$ is faithful this map is injective.  Any $Q$-set automorphism $\alpha$ of $S_a$ may be extended to a $Q$-set automorphism of $S$, where $\alpha$ acts trivially on $S \setminus S_a$.  Since $S_a$ is a finite set, $\alpha$ acts trivially on $(i,x)$ for large enough $i$ and any $x \in \{1, \ldots, n\}$, and hence $\alpha$ is an element of $H_n$.  Finally, since $\alpha$ is a $Q$-set automorphism $q \alpha s = \alpha q s$, equivalently $\alpha^{-1} q \alpha s = s$, for all $s \in S$ and $q \in Q$, showing that $\alpha \in C_a$ and so the map $C_a \to T$ is surjective.
\end{proof}

\begin{lemma}\label{lemma:S_a is disjoint union of Q/Qa}
 $S_a$ is $Q$-set isomorphic to the disjoint union of $r$ copies of $Q / Q_a$, where $Q_a$ is an isotropy group of $S_a$ and $r = \vert S_a \vert / \vert Q : Q_a \vert$.
\end{lemma}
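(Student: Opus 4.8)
The plan is to decompose $S_a$ into its $Q$-orbits and recognise each one individually. First I would invoke the orbit--stabiliser correspondence: for any $x \in S_a$ the $Q$-orbit $Qx$ is isomorphic, as a $Q$-set, to $Q/Q_x$, where $Q_x$ is the isotropy group of $x$ in $Q$. By the way the partition $S = S^Q \cup S_1 \cup \cdots \cup S_t$ was defined (Proposition \ref{prop:groupActionsOnSets}(3)), every point of $S_a$ has isotropy group $Q$-conjugate to the chosen representative $Q_a$.

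The key point is then that $Q$-conjugate isotropy groups give $Q$-set isomorphic orbits: if $Q_x = g Q_a g^{-1}$ for some $g \in Q$, then the assignment $hQ_a \mapsto hg^{-1}Q_x$ is a well-defined bijection $Q/Q_a \to Q/Q_x$ (if $h' = hq$ with $q \in Q_a$ then $h'g^{-1} = hg^{-1}(gqg^{-1})$ and $gqg^{-1} \in Q_x$) which commutes with the left $Q$-action, hence is an isomorphism of $Q$-sets. Therefore every $Q$-orbit contained in $S_a$ is $Q$-set isomorphic to $Q/Q_a$.

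Next I would use finiteness. Since $Q$ is a finite subgroup of $H_n$, the set $S_Q = S \setminus S^Q$ is finite (as observed just before Lemma \ref{lemma:prop conditions are satisfied}), and $S_a \subseteq S_Q$, so $S_a$ is finite and decomposes as a disjoint union of finitely many $Q$-orbits, say $r$ of them. Combining this with the previous paragraph yields a $Q$-set isomorphism $S_a \cong \coprod_{i=1}^{r} Q/Q_a$. Counting elements gives $\vert S_a \vert = r \cdot \vert Q : Q_a \vert$, equivalently $r = \vert S_a \vert / \vert Q : Q_a \vert$; in particular $\vert Q : Q_a \vert$ divides $\vert S_a \vert$, and $r$ does not depend on the choice of representative $Q_a$ since conjugate subgroups have equal index.

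I do not expect a genuine obstacle here: the only step requiring a little care is the identification of $Q/Q_a$ with $Q/(gQ_ag^{-1})$ as $Q$-sets, together with the remark that $r$, being a ratio of conjugation-invariant indices, is well defined.
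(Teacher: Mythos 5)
Your proof is correct and follows essentially the same route as the paper: decompose $S_a$ into finitely many $Q$-orbits, use that all isotropy groups occurring in $S_a$ are $Q$-conjugate (the defining property of the partition), identify each orbit with $Q/Q_a$, and count. The only cosmetic difference is that you construct an explicit $Q$-set isomorphism $Q/Q_a \to Q/(gQ_ag^{-1})$, whereas the paper instead replaces each orbit representative $s_k$ by a translate $qs_k$ so that all representatives share the \emph{same} stabiliser $Q_a$ --- two equivalent ways of handling the same point.
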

\begin{proof}
 $S_a$ is finite and so splits as a disjoint union of finitely many $Q$-orbits.  Choose orbit representatives $\{s_1, \ldots, s_r \} \subset S_a$ for these orbits, these $s_k$ may be chosen to have the same $Q$-stabilisers: If $Q_{s_1} \neq Q_{s_2}$ then there is some $q \in Q$ such that $Q_{qs_2} = qQ_{s_2} q^{-1} = Q_{s_1}$ (the partitions $S_a$ were chosen to have this property by Proposition \ref{prop:groupActionsOnSets}), iterating this procedure we get a set of representatives who all have isotropy group $Q_{s_1}$.  Now set $Q_a = Q_{s_1}$ and note that there are $\vert Q:Q_a \vert $ elements in each of the $Q$-orbits so $ r \vert Q : Q_a \vert = \vert S_a \vert$.
\end{proof}

Recall that if $G$ is any group and $r \ge 1$ is some natural number then the wreath product $ G \wr \Sym_r $ is the semi-direct product
$$ G \wr \Sym_r = \prod_{k = 1}^r G \rtimes \Sym_r $$
where the symmetric group $\Sym_r$ acts by permuting the factors in the direct product.  

Recall also that for any subgroup $H$ of a group $G$, the Weyl group $W_G H$ is defined to be $W_G H = N_G H / H$.
\begin{lemma}\label{lemma:description of Ca}
The group $C_a$ is isomorphic to the wreath product $ W_QQ_a \wr \Sym_r $, where $Q_a$ is some isotropy group of $S_a$ and $r = \vert S_a \vert / \vert Q : Q_a \vert$.
\end{lemma}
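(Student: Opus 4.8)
The plan is to feed the two previous lemmas into a formal computation of the automorphism group of a disjoint union of isomorphic transitive $Q$-sets. By Lemma~\ref{lemma:Ca iso T}, $C_a$ is isomorphic to the group $T$ of $Q$-set automorphisms of $S_a$, and by Lemma~\ref{lemma:S_a is disjoint union of Q/Qa}, $S_a$ is $Q$-isomorphic to $\bigsqcup_{k=1}^r Q/Q_a$. Hence it suffices to identify $\Aut_{Q\text{-Set}}\bigl(\bigsqcup_{k=1}^r Q/Q_a\bigr)$ with $W_Q Q_a \wr \Sym_r$.

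First I would dispose of the transitive case $r=1$, showing $\Aut_{Q\text{-Set}}(Q/Q_a) \cong W_Q Q_a = N_Q(Q_a)/Q_a$. For $x \in N_Q(Q_a)$ the rule $gQ_a \mapsto gx^{-1}Q_a$ is a well-defined $Q$-map, independence of coset representative holding precisely because $x$ normalises $Q_a$, and it is bijective with inverse coming from $x$. One checks directly that $x \mapsto (gQ_a \mapsto gx^{-1}Q_a)$ is a group homomorphism $N_Q(Q_a) \to \Aut_{Q\text{-Set}}(Q/Q_a)$ with kernel $Q_a$. Surjectivity is the observation, recalled in Section~\ref{section:review of bredon finiteness}, that every $Q$-map $Q/Q_a \to Q/Q_a$ has the form $gQ_a \mapsto gyQ_a$ with $y^{-1}Q_ay \subseteq Q_a$, and that for such a map to be surjective we must have $y \in N_Q(Q_a)$.

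Next I would treat general $r$. Any $\alpha \in \Aut_{Q\text{-Set}}\bigl(\bigsqcup_{k=1}^r Q/Q_a\bigr)$ carries $Q$-orbits to $Q$-orbits bijectively and so permutes the $r$ transitive components, yielding a homomorphism $\pi \colon \Aut_{Q\text{-Set}}\bigl(\bigsqcup_{k=1}^r Q/Q_a\bigr) \to \Sym_r$. Fixing, for each $k$, the standard identification of the $k$-th component with $Q/Q_a$, an element of $\ker \pi$ restricts to an automorphism on each component, so $\ker \pi \cong \prod_{k=1}^r \Aut_{Q\text{-Set}}(Q/Q_a) \cong \prod_{k=1}^r W_Q Q_a$; and $\pi$ is split by letting $\Sym_r$ permute the components through those fixed identifications. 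Combining the pieces gives $\Aut_{Q\text{-Set}}\bigl(\bigsqcup_{k=1}^r Q/Q_a\bigr) \cong \bigl(\prod_{k=1}^r W_Q Q_a\bigr) \rtimes \Sym_r$, with $\Sym_r$ acting by permuting the factors, which is exactly $W_Q Q_a \wr \Sym_r$ in the sense defined above; hence $C_a \cong W_Q Q_a \wr \Sym_r$.

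The argument is essentially mechanical once Lemmas~\ref{lemma:Ca iso T} and~\ref{lemma:S_a is disjoint union of Q/Qa} are available. The only points that need attention are getting the direction right in $\Aut_{Q\text{-Set}}(Q/Q_a)\cong W_Q Q_a$ (one inverts so as to obtain a homomorphism rather than an anti-homomorphism, though a group is isomorphic to its opposite so nothing substantive is at stake), and verifying that the extension $1 \to \prod_k W_Q Q_a \to \Aut_{Q\text{-Set}}\bigl(\bigsqcup_{k=1}^r Q/Q_a\bigr) \to \Sym_r \to 1$ splits compatibly with the permutation action, so that one really recovers the wreath product and not merely an abstract extension. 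I do not expect a genuine obstacle here.
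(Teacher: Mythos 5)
Your proposal is correct and follows essentially the same route as the paper: reduce via Lemmas \ref{lemma:Ca iso T} and \ref{lemma:S_a is disjoint union of Q/Qa} to the $Q$-set automorphisms of $\bigsqcup_{k=1}^r Q/Q_a$, identify the transitive case with $W_QQ_a$, and exhibit the split extension by $\Sym_r$ permuting the components. The only cosmetic difference is that you work abstractly with the automorphism group of the model $Q$-set while the paper carries out the same computation directly inside $C_a \le H_n$ (checking that the splitting $\iota(\sigma)$ is a well-defined element of $H_n$); nothing substantive is at stake.
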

\begin{proof}
 Using Lemmas \ref{lemma:Ca iso T} and \ref{lemma:S_a is disjoint union of Q/Qa}, $C_a$ is isomorphic to the group of $Q$-set automorphisms of the disjoint union of $r$ copies of $Q/Q_a$.

 To begin, we show the group of automorphisms of the $Q$-set $Q / Q_a$ is isomorphic to $W_QQ_a$.  An automorphism $\alpha : Q/Q_a \to Q/Q_a$ is determined by the image $\alpha(Q_a) = qQ_a$ of the identity coset and such an element determines an automorphism if and only if $q^{-1} Q_a q \le Q_a $, equivalently $q \in N_Q Q_a$.  Since two elements $q_1,q_2 \in Q$ will determine the same automorphism if and only if $q_1 Q_a = q_2Q_a$, the group of $Q$-set automorphisms of $Q/Q_a$ is the Weyl group $W_QQ_a$.

 For the general case, note that if $c \in C_a$ then $c$ permutes the $Q$-orbits $\{Qs_1, \ldots, Qs_r\}$, so there is a map $C_a \to \Sym_r$.  Assume that the representatives $\{s_1, \ldots, s_r\}$ have been chosen, as in the proof of Lemma \ref{lemma:S_a is disjoint union of Q/Qa}, to have the same $Q$-stabilisers.  The map $\pi$ is split by the map 
\begin{align*}
 \iota:\Sym_r &\to C_a \\
 \sigma &\mapsto \left( \iota(\sigma):  qs_k \mapsto qs_{\sigma(k)} \text{ for all }q \in Q \right)
\end{align*}
Each $\iota(\sigma)$ is a well defined element of $H_n$ since 
$$qs_k = \tilde{q} s_k \Leftrightarrow \tilde{q}^{-1}q \in Q_{s_k} = Q_{s_{\sigma(k)}} \Leftrightarrow qs_{\sigma(k)} = \tilde{q} s_{\sigma(k)}$$

The kernel of the map $\pi$ is exactly the elements of $C_a$ which fix each $Q$-orbit but may permute the elements inside the $Q$-orbits, by the previous part this is exactly $\prod_{k = 1}^r W_QQ_a$.  For any $\sigma \in \Sym_r$, the element $\iota(\sigma)$ acts on $\prod_{k = 1}^r W_QQ_a$ by permuting the factors, so the group $C_a$ is indeed isomorphic to the wreath product.
\end{proof}

\begin{figure}[ht]
 \centering
 \includegraphics[width = 0.4\textwidth]{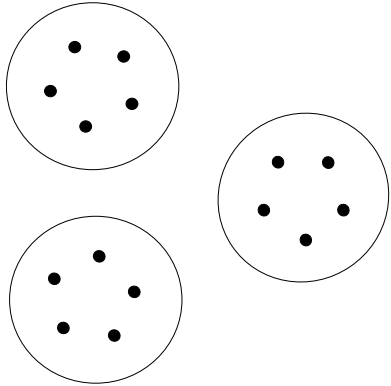}
 \caption{A representation of $S_a$.  The large circles are the sets $\{Qs_1 \ldots, Qs_r\}$ (in this figure $r = 3$).  Elements of $\Sym_r$ permute only the large circles, while elements of $\prod_{k = 1}^r W_QQ_a$ leave the large circles fixed and permute only elements inside them.}
\end{figure}

The centraliser $C_{H_n}Q$ can now be completely described.

\begin{proposition}\label{prop:stabOfH}
 The centraliser $C_{H_n}(Q)$ of any finite subgroup $Q \le H_n$ splits as a direct product 
$$C_{H_n}(Q) \cong  H_n|_{S^Q} \times C_1 \times \cdots \times C_t$$
where $H_n|_{S^Q} \cong H_n$ is Houghton's group restricted to $S^Q$ and for all $a \in \{1, \ldots, t\}$,
$$C_a \cong W_Q Q_a \wr \Sym_r $$
for $Q_a$ is an isotropy group of $S_a$ and r = $\vert S_a \vert / \vert Q : Q_a \vert$.  In particular $H_n$ is finite index in $C_{H_n}(Q)$.
\end{proposition}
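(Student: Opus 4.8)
The plan is to assemble the pieces already established. First I would verify that the hypotheses of Proposition \ref{prop:groupActionsOnSets}(4) hold for the action of $H_n$ on $S$ together with the subgroup $Q$: the action is faithful because an element of $H_n$ is determined by its action on $S$, and the remaining hypothesis — that for each block $S_a$ of the partition and each $h \in H_n$ there is an element of $H_n$ fixing $S \setminus S_a$ pointwise and agreeing with $h$ on $S_a$ — is precisely Lemma \ref{lemma:prop conditions are satisfied} applied to the finite subgroup $Q$.

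Next I would record the partition. Since $\phi(Q) = 0$, the set $S_Q = S \setminus S^Q$ is finite (the computation preceding Lemma \ref{lemma:prop conditions are satisfied}), so partitioning $S$ by the $Q$-conjugacy classes of point stabilisers as in Proposition \ref{prop:groupActionsOnSets}(3) produces only finitely many blocks: $S = S^Q \cup S_1 \cup \cdots \cup S_t$ with $t < \infty$, the block $S^Q$ being the set of points whose stabiliser is all of $Q$. Proposition \ref{prop:groupActionsOnSets}(4) then yields a direct product decomposition of $C_{H_n}(Q)$ into the factor acting trivially off $S^Q$ and the factors $C_a$ acting trivially off $S_a$ for $a \in \{1, \ldots, t\}$.

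I would then identify the factors. Because $Q$ acts trivially on $S^Q$, every permutation of $S^Q$, extended by the identity on the finite set $S_Q$, is an element of $H_n$ that centralises $Q$; hence the first factor is $H_n|_{S^Q}$, and a bijection $S^Q \to S$ that is eventually a translation on each ray induces an isomorphism $H_n|_{S^Q} \cong H_n$. For the remaining factors, Lemma \ref{lemma:description of Ca} (through Lemmas \ref{lemma:Ca iso T} and \ref{lemma:S_a is disjoint union of Q/Qa}) gives $C_a \cong W_Q Q_a \wr \Sym_r$ with $Q_a$ an isotropy group of $S_a$ and $r = |S_a| / |Q : Q_a|$. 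For the final clause, note that $W_Q Q_a$ is a subquotient of the finite group $Q$ and $\Sym_r$ is finite, so each $C_a$ is finite; as there are only finitely many of them, $C_1 \times \cdots \times C_t$ is finite and $H_n \cong H_n|_{S^Q}$ sits inside $C_{H_n}(Q)$ with finite index.

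There is no serious obstacle here, since all the analytic work is carried by the preceding lemmas and what remains is bookkeeping. The one point needing a little care is the identification of the $S^Q$-factor with $H_n$: one must check both that arbitrary permutations of $S^Q$ (extended trivially) really do land in $H_n$, which relies on the finiteness of $S_Q$, and that a suitably chosen bijection $S^Q \to S$ preserves the ``eventually a translation'' structure, so as to yield an isomorphism of groups rather than merely an abstract identification.
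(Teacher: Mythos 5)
Your proposal is correct and follows essentially the same route as the paper: the paper establishes the decomposition, the identification of the $S^Q$-factor, and the structure of each $C_a$ in the discussion and lemmas preceding the proposition, so its stated proof is just the assembly step you describe. Your explicit remark that each $C_a$ is finite (whence the finite-index claim) is a small but welcome addition the paper leaves implicit.
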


\begin{proof}
We have already proven that 
$$C_{H_n}(Q) \cong  H_n|_{S^Q} \times C_1 \times \cdots \times C_t$$
and Lemma \ref{lemma:description of Ca} gives the required description of $C_a$.
\end{proof}

\begin{corollary}\label{cor:stabOfH is FPn-1 not FPn}
 If $Q$ is a finite subgroup of $H_n$ then the centraliser $C_{H_n}(Q)$ is $\FP_{n-1}$ but not $\FP_n$.
\end{corollary}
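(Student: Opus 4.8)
The plan is to deduce the statement directly from the structural description in Proposition \ref{prop:stabOfH} together with Brown's theorem that $H_n$ is $\FP_{n-1}$ but not $\FP_n$ \cite{Brown-FinitenessPropertiesOfGroups}. By Proposition \ref{prop:stabOfH} we have
$$C_{H_n}(Q) \cong H_n|_{S^Q} \times C_1 \times \cdots \times C_t$$
with $H_n|_{S^Q} \cong H_n$ and $C_a \cong W_Q Q_a \wr \Sym_r$ for suitable $Q_a \le Q$ and $r \in \NN$. The first thing I would check is that each factor $C_a$ is a \emph{finite} group: since $Q$ is finite, so is the Weyl group $W_Q Q_a = N_Q Q_a / Q_a$, and $\Sym_r$ is finite, so the wreath product $W_Q Q_a \wr \Sym_r$ is finite; as $t < \infty$ the product $C_1 \times \cdots \times C_t$ is a finite group $F$. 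Hence $C_{H_n}(Q) \cong H_n \times F$ with $F$ finite, so the copy of $H_n$ inside $C_{H_n}(Q)$ has finite index.

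Next I would invoke the fact that $\FP_n$ is inherited by and from finite-index subgroups \cite[Proposition 2.7]{Bieri-HomDimOfDiscreteGroups}, exactly as already used in the proof of Corollary \ref{cor:uFP_n equivalent conditions}. Applying this to $H_n \le C_{H_n}(Q)$ of finite index shows that, for every $n$, the group $C_{H_n}(Q)$ is $\FP_n$ if and only if $H_n$ is $\FP_n$. Since $H_n$ is $\FP_{n-1}$ but not $\FP_n$, the same is true of $C_{H_n}(Q)$, which is the claim.

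There is essentially no obstacle here; all the real work has gone into establishing Proposition \ref{prop:stabOfH}. The only points needing (minor) care are verifying that each $C_a$ is genuinely finite — this uses finiteness of $Q$ in two places, namely for $W_Q Q_a$ and for the number $r$ of $Q$-orbits making up $S_a$ — and citing the correct closure property of $\FP_n$ under commensurability. As an alternative to the finite-index argument one could run the Künneth formula for $H_n \times F$, using that the finite group $F$ is $\FP_\infty$, but invoking the finite-index invariance of $\FP_n$ is cleaner and is already in use in the paper.
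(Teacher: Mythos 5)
Your proposal is correct and is essentially the paper's own argument: Proposition \ref{prop:stabOfH} already records that $H_n$ has finite index in $C_{H_n}(Q)$, and the paper likewise concludes by combining Brown's result that $H_n$ is $\FP_{n-1}$ but not $\FP_n$ with the finite-index invariance of $\FP_n$ (cited there as \cite[VIII.5.5.1]{Brown}). Your extra check that each $C_a$ is finite is a harmless elaboration of what Proposition \ref{prop:stabOfH} already asserts.
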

\begin{proof}
 $H_n$ is finite index in the centraliser $C_{H_n}(Q)$ by Proposition \ref{prop:stabOfH}.  Appealing to Brown's result \cite[5.1]{Brown-FinitenessPropertiesOfGroups} that $H_n$ is $\FP_{n-1}$ but not $\FP_n$, and that a group is $\FP_n$ if and only if a finite index subgroup is $\FP_n$ \cite[VIII.5.5.1]{Brown} we can deduce $C_{H_n}(Q)$ is $\FP_{n-1}$ but not $\FP_n$. 
\end{proof}

\section{Centralisers of Elements in \texorpdfstring{$H_n$}{Hn}}\label{section:centralisersOfElements}

If $q \in H_n$ is an element of finite order then the subgroup $Q = \langle q \rangle$ is a finite subgroup and the previous section may be used to describe the centraliser $C_{H_n}(q) = C_{H_n}(Q)$.  Thus for an element $q$ of finite order $C_{H_n}(q) \cong C \times H_n$ for some finite group $C$.

If $q \in H_n$ is an element of infinite order and $Q = \langle q \rangle$ then we may apply Proposition \ref{prop:groupActionsOnSets}(3) to split up $S$ into a disjoint collection $\{ S_a \: : \: a \in A \subseteq \NN\} \cup S^Q$ ($S^Q$ is the element of the collection associated to the isotropy group $Q$).  Assume that $S_0$ is the set associated to the trivial isotropy group.  Since $q$ is a translation on $(i,x) \in S = \NN \times \{1, \ldots, n\}$ for large enough $i$ and points acted on by such a translation have trivial isotropy, there are only finitely many elements of $S$ whose isotropy group is neither the trivial group nor $Q$.  Hence $S_a$ is finite for $a \neq 0$ and the set $A$ is finite.  From now on let $A = \{0, \ldots, t\}$.  We now use Lemma \ref{lemma:prop conditions are satisfied} and Proposition \ref{prop:groupActionsOnSets}(4) as in the previous section: $C_{H_n}(Q)$ splits as 
$$C_{H_n}(Q) \cong C_0 \times C_1 \times \cdots \times C_t \times H_n|_{S^Q}$$
Where $C_a$ acts only on $S_a$ and $H_n|_{S^Q}$ is Houghton's group restricted to $S^Q$.  Unlike in the last section, $H_n|_{S^Q}$ may not be isomorphic to $H_n$.  Let $J \subseteq \{1, \ldots, n\}$ satisfy
$$ x \in J \text{ if and only if } (i,x) \in S^Q \text{ for all }i \ge z_x \text{, some }z_x \in \NN $$
If $x \notin J$ then for large enough $i$, $q$ must act as a non-trivial translation on $(i,x)$, and the set $\left(\NN \times \{x\}  \right) \cap S^Q $ is finite.  Clearly $\vert J \vert \le n-2$, but different elements $q$ may give values $0 \le \vert J \vert \le n-2$.  In the case $\vert J \vert = 0$, $S^Q$ is necessarily finite and so $H_n|_{S^Q}$ is isomorphic to a finite symmetric group on $S^Q$.  It is also possible that $S^Q  = \emptyset$, in which case $H_n|_{S^Q}$ is just the trivial group.  If $\vert J \vert \neq 0$ then the argument proceeds as in the previous section by choosing a bijection
$$S^Q \to \NN \times J$$
such that $(i,x) \mapsto (i + m_x, x)$ for some $m_x \in \ZZ$ whenever $i$ is large enough and $x \in J$.  This set map induces a group isomorphism between $H_n|_{S^Q}$ and $H_{\vert J \vert}$ (Houghton's group on the set $J \times \NN$).   

Lemma \ref{lemma:description of Ca} describes the groups $C_a$ for $a \neq 0$, so it remains only to treat the case $a=0$.  We cannot use the arguments used for $a \neq 0$ here as the set $S_0$ is not finite, in particular Lemma \ref{lemma:Ca iso T} doesn't apply: Every $Q$-set isomorphism of $S_0$ is realised by an element of the infinite support permutation group on $S_0$, but there are $Q$-set isomorphisms of $S_0$ which are not realised by an element of $H_n$.

The next three lemmas are needed to describe $C_0$, this description will use the graph $\Gamma$ which we now describe.  The vertices of $\Gamma$ are those $x \in \{1, \ldots, n\}$ for which $q$ acts non-trivially on infinitely many elements of $ \NN \times \{ x\} $.  Equivalently, the vertices are the elements of $\{1, \ldots, n\} \setminus J$.  There is an edge from $x$ to $y$ in $\Gamma$ if there exists $s \in S_0$ and $N \in \NN$ such that for all $m \ge N$ we have $q^{-m}s \in \NN \times \{ x \} $ and $q^m s \in \NN \times \{ y\}$.  Let $\pi_0 \Gamma$ denote the path components of $\Gamma$, and for any vertex $x$ of $\Gamma$ denote by $[x]$ the element of $\pi_0 \Gamma$ corresponding to that vertex.

Let $z \in \NN$ be some integer such that for all $i \ge z$, $q$ acts trivially or as a translation on $(i, x)$ for all $x \in \{1, \ldots, n\}$.  Fix $z$ for the remainder of this section.

For each path component $[x]$ in $\pi_0 \Gamma$, let $S_0^{[x]}$ denote the smallest $Q$-subset of $S_0$ containing the set $\{(i,y) \: : \: i \ge z , \: y \in [x] \}$.  Note that $(i,y) \notin S_0^{[x]}$ for any $y \notin [x]$ and $i \ge z$, since if $(i,x)$ and $(j,y)$ are two elements of $S_0$ in the same $Q$-orbit with $i \ge z$ and $j \ge z$ then there is an edge between $x$ and $y$ in $\Gamma$:  If $(i,x) = q^k(j,y)$ and $q$ acts as a positive translation on the element $(i,x)$ then let $N=k$ and $s = (i,x)$, similarly for when $q$ acts as a negative translation.  This gives a $Q$-set decomposition of $S_0$ as:
$$S_0 = \coprod_{[x] \in \pi_0 \Gamma} S_0^{[x]} $$
Where $\coprod$ denotes disjoint union.

\begin{lemma}\label{lemma:decomposition of elements in S0x}
Let $[x] \in \pi_0 \Gamma$, if $C_0^{[x]}$ denotes the subgroup of $C_0$ which acts non-trivially only on $S_0^{[x]}$ then there is an isomorphism
$$C_0 \cong C_0^{[x_1]} \times \cdots \times C_0^{[x_r]} $$
Where $[x_1], [x_2], \ldots, [x_r]$ are all elements of $\pi_0 \Gamma$.
\end{lemma}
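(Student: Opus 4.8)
The plan is to exhibit the isomorphism as the obvious "restrict to each piece" map and check it is well defined and bijective, using the $Q$-set decomposition $S_0 = \coprod_{[x] \in \pi_0 \Gamma} S_0^{[x]}$ already established. Concretely, given $c \in C_0$, I want to show that $c$ restricts to a permutation of each $S_0^{[x]}$, that this restriction (extended by the identity on the complement) lies in $H_n$ hence in $C_0^{[x]}$, and that $c$ is the product of these restrictions. Since the action of $C_0$ on $S_0$ is faithful, the resulting map $C_0 \to \prod_{[x]} C_0^{[x]}$ is automatically injective, and surjectivity is immediate because each $C_0^{[x]} \le C_0$ by definition, with distinct factors acting on disjoint subsets and hence commuting.

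First I would observe that any $c \in C_0$ permutes $Q$-orbits among themselves (Proposition \ref{prop:groupActionsOnSets}(2) applied to the $H_n$-action: $Q_{cs} = Q_s$ for $c$ centralising $Q$, and $c$ carries $Q$-orbits to $Q$-orbits because it commutes with the $Q$-action). Since each $S_0^{[x]}$ is a union of $Q$-orbits, it suffices to show $c$ does not move a $Q$-orbit from $S_0^{[x]}$ into $S_0^{[y]}$ for $[x] \ne [y]$. This is where the definition of $\Gamma$ does its work: each $Q$-orbit in $S_0$ meets $\{(i,y): i \ge z\}$ in a set whose second coordinates all lie in a single path component (this is exactly the computation just before the lemma, showing two such points in one $Q$-orbit are joined by an edge of $\Gamma$). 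A $Q$-orbit $Qs$ in $S_0$ is infinite (isotropy trivial) and $q$ eventually translates, so $Qs$ contains points $(i,y)$ with $i \ge z$; the set of second coordinates occurring this way is a well-defined nonempty subset of a single $[x]$, and it is preserved by $c$ since $c$ is eventually a translation in each coordinate (pick $z' \ge z$ past which both $c$ and $q$ and $c^{-1}$ act as translations; then for $i \ge z'$, $(i,y) \in S_0^{[x]} \iff c(i,y) = (i+m_y, y) \in S_0^{[x]}$ because the second coordinate is unchanged). Hence $c$ maps $S_0^{[x]}$ onto $S_0^{[x]}$.

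Next I would define $c^{[x]}$ to be the permutation of $S$ agreeing with $c$ on $S_0^{[x]}$ and fixing everything else, and check $c^{[x]} \in H_n$: outside $S_0^{[x]}$ it is the identity, and on $S_0^{[x]}$ it agrees with $c \in H_n$, so for $i$ large enough $c^{[x]}$ acts on $(i,y)$ either as the identity (if $y \notin [x]$, using that $(i,y)\notin S_0^{[x]}$ for $i \ge z$) or exactly as $c$ does, which is a translation; so $c^{[x]}$ is eventually a translation and lies in $H_n$. It commutes with $Q$ because $c$ does and the decomposition of $S_0$ is a $Q$-set decomposition, so $c^{[x]} \in C_0^{[x]}$. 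Since the $S_0^{[x]}$ partition $S_0$ and $c$ fixes $S \setminus S_0$ pointwise, $c = \prod_{[x]} c^{[x]}$ in $H_n$ (finite product, factors with disjoint supports commute). This gives a homomorphism $C_0 \to C_0^{[x_1]} \times \cdots \times C_0^{[x_r]}$ which is injective by faithfulness and surjective as noted.

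The main obstacle is the second paragraph: proving that an arbitrary $c \in C_0$ cannot carry a $Q$-orbit across path-component boundaries of $\Gamma$. This is exactly the point of introducing $\Gamma$, and it rests on combining (i) the eventual-translation behaviour of elements of $H_n$, which pins down the second coordinate of almost every point in a $Q$-orbit, with (ii) the fact, established just before the lemma, that two eventually-translated points in a common $Q$-orbit force an edge in $\Gamma$. Once that is in hand, everything else — well-definedness of $c^{[x]}$ as an element of $H_n$, commuting with $Q$, the product decomposition, injectivity, surjectivity — is routine bookkeeping of the kind already carried out in Lemmas \ref{lemma:Ca iso T} and \ref{lemma:description of Ca}.
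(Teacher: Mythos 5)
Your proposal is correct and follows essentially the same route as the paper: define the restriction $c_{[x]}$ of $c$ to $S_0^{[x]}$, check it lies in $H_n$ and centralises $Q$, and use faithfulness of the action to get $c = c_{[x_1]}\cdots c_{[x_r]}$ with commuting factors. The one place you go beyond the paper is in explicitly verifying that $c$ maps each $S_0^{[x]}$ onto itself (via the eventual-translation behaviour and the edge criterion for $\Gamma$); the paper's proof takes this for granted, so your extra care here is a welcome filling-in of a genuinely needed detail rather than a divergence.
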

\begin{proof}
If $c \in C_0$ and $[x] \in \pi_0 \Gamma$ then let $c_{[x]}$ denote the permutation of $S$ such that $c_{[x]}$ acts as $c$ does on $S_0^{[x]}$, and acts trivially on $S \setminus S_0^{[x]}$.  We will show that $c_{[x]}$ is an element of $C_0$.  Since the action of $C_0$ on $S_0$ is faithful it follows that the elements $c_{[x]}$ and $c_{[y]}$ commute and 
$$c = c_{[x_1]} c_{[x_2]} \cdots c_{[x_r]} $$
which suffices to prove the lemma.  

Let $y \in \{1,\ldots,n\}$.  The element $c_{[x]}$ acts trivially on $(i,y)$ for $i \ge z$ if $y \notin [x]$ and acts as as $c$ does on $(i,y)$ for $i \ge z$ if $y \in [x]$, thus $c_{[x]}$ is an element of $H_n$.  Since $c_{[x]}$ is also a $Q$-set automorphism of $S$, $c_{[x]}$ is a member of $C_0$.
\end{proof}

\begin{lemma}\label{lemma:uniquness of action on S0x}
 Let $[x] \in \pi_0\Gamma$, $c \in C_0$, and $z^\prime \in \NN$ such that $c$ acts either trivially or as a translation on $(i,x)$ for all $x \in \{1, \ldots, n\}$ and $i \ge z^\prime$.  Then the action of $c$ on some element $(i, x) \in S$ for $i \ge z^\prime$ completely determines the action of $c$ on $S_0^{[x]}$.
\end{lemma}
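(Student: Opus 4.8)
The plan is to use that $c$ commutes with $q$ and hence is a $Q$-set map on $S_0$: if $c$ is known on a point $s \in S_0$ then it is known on the whole orbit $Qs$, since $c(q^k s) = q^k c(s)$ for all $k \in \ZZ$. So it suffices to determine $c$ on a set of $Q$-orbit representatives for $S_0^{[x]}$. By definition $S_0^{[x]}$ is the smallest $Q$-subset of $S_0$ containing the tail $T := \{(i,y)\: :\: i \ge z,\ y \in [x]\}$, so $S_0^{[x]} = Q \cdot T$ and every $Q$-orbit in $S_0^{[x]}$ meets $T$. Thus the lemma is equivalent to the assertion that $c(i,x)$ for one $i \ge z^\prime$ determines $c$ on all of $T$, i.e.\ on $\{(i,y)\: :\: i \ge z\}$ for every $y \in [x]$.

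First I would treat the column $x$ itself. Since $x$ is a vertex of $\Gamma$ we have $x \notin J$, so $q$ acts on $\NN \times \{x\}$ as a non-trivial translation for all indices $\ge z$; and by hypothesis $c$ acts on $\NN \times \{x\}$ as a translation for indices $\ge z^\prime$. Knowing $c(i,x)$ for a single $i \ge z^\prime$ therefore pins down the translation length of $c$ on $\NN \times \{x\}$, hence $c$ on all of $\{(j,x)\: :\: j \ge z^\prime\}$. For $z \le j < z^\prime$ the point $(j,x)$ lies in the same $Q$-orbit as some $(j^\prime, x)$ with $j^\prime \ge z^\prime$ (iterate $q$ or $q^{-1}$ in whichever direction increases the index, which keeps us in column $x$), so $c(j,x)$ is recovered from $c(j^\prime, x)$ by $Q$-equivariance. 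Hence $c$ is determined on all of $\{(j,x)\: :\: j \ge z\}$.

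Next I would propagate this along $\Gamma$. Suppose $c$ is already known on $\{(i,x^\prime)\: :\: i \ge z\}$ and there is an edge between $x^\prime$ and $y^\prime$ in $\Gamma$; I claim $c$ is then determined on $\{(i,y^\prime)\: :\: i \ge z\}$. By the definition of the edge there is $s \in S_0$ and $N$ with $q^{-m}s \in \NN \times \{x^\prime\}$ and $q^m s \in \NN \times \{y^\prime\}$ for all $m \ge N$; the points $q^{-m}s$ are infinitely many distinct elements of $\NN \times \{x^\prime\}$ and the points $q^m s$ are infinitely many distinct elements of $\NN \times \{y^\prime\}$, so for $m$ large enough the first coordinate of $q^{-m}s$ is $\ge z$ and the first coordinate of $q^m s$ is $\ge z^\prime$. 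Fix such an $m$. Then $c(q^{-m}s)$ is already known, and since $q^m s = q^{2m}(q^{-m}s)$ we get $c(q^m s) = q^{2m} c(q^{-m}s)$, which is therefore determined; as $q^m s$ has first coordinate $\ge z^\prime$ in column $y^\prime$ this pins down the translation length of $c$ on $\NN \times \{y^\prime\}$, and exactly as in the previous paragraph this recovers $c$ on all of $\{(i,y^\prime)\: :\: i \ge z\}$. The same orbit transfers information in the opposite direction as well (write $q^{-m}s = q^{-2m}(q^m s)$), so the orientation of edges is irrelevant; running this across a spanning tree of the path component $[x]$ determines $c$ on $T$, and then $Q$-equivariance determines $c$ on $S_0^{[x]} = Q \cdot T$.

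The work is bookkeeping rather than conceptual: the one thing to be careful about is that the orbit points used to cross an edge can genuinely be chosen with arbitrarily large first coordinate in the relevant columns — which is where the hypothesis that the columns in $[x]$ are not in $J$ (so are actually moved by $q$) and the triviality of the $Q$-isotropy on $S_0$ (so these orbits are infinite) enter — and that the finitely many low-index points of each column are reached through their $Q$-orbits rather than directly.
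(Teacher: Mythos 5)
Your proof is correct and follows essentially the same route as the paper: use the translation hypothesis to determine $c$ on each column from one value, transfer between columns of $[x]$ via the $Q$-orbits witnessing the edges of $\Gamma$, and recover the remaining finitely many low-index points by $Q$-equivariance. The only difference is organisational (you determine $c$ on the tail $T$ and extend to $S_0^{[x]} = Q\cdot T$, while the paper works with the set of points of index $\ge z^\prime$ and its finite complement), which changes nothing of substance.
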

\begin{proof}
Firstly, note that knowing the action of $c$ on some element $(i,x)$ for $i \ge z^\prime$ determines the action of $c$ on the set $\{(i,x) \: : \: i \ge z^\prime \}$, since we chose $z^\prime$ in order to have this property.

Let $y \in [x]$ such that there is an edge from $x$ to $y$, so there is a natural number $N$ and element $s \in S_0^{[x]}$ such that $ q^N s = (i,x)$ and $q^{-N} s = (j,y)$ for some natural numbers $i$ and $j$.  By choosing $N$ larger if necessary we can take $i, j \ge z^\prime$.  The action of $c$ on $(j,y)$ is now completely determined by the action on $(i,x)$, since
$$ c(j,y) = c q^{-2N}(i,x) = q^{-2N} c(i,x) $$
For any $y \in [x]$ there is a path from $x$ to $y$ in $\Gamma$, so we've determined the action of $c$ on the set $X = \{(j,y) \: : \: j \ge z^\prime \: , \: y \in [x]\}$.  If $s \in S_0^{[x]} \setminus X$ then, since $S_0^{[x]} \setminus X$ is finite, there is some integer $m$ with $q^m s = x \in X$.  So $c s = c q^{-m} x = q^{-m} c x $, which completely determines the action of $c$ on $s$.
\end{proof}

\begin{lemma}\label{lemma:C0x iso Z}
 For any $[x] \in \pi_0\Gamma$, there is an isomorphism
$$C_0^{[x]} \cong \ZZ $$
\end{lemma}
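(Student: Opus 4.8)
The plan is to construct an injective homomorphism $\psi\colon C_0^{[x]}\to\ZZ$ and then to exhibit one non-trivial element in its image; this forces $C_0^{[x]}$ to be isomorphic to a non-trivial, hence infinite cyclic, subgroup of $\ZZ$. To define $\psi$, fix once and for all the vertex $x$ representing the path component $[x]$ (so $x$ is a vertex of $\Gamma$, i.e.\ $x\in\{1,\dots,n\}\setminus J$). Any $c\in C_0^{[x]}\subseteq H_n$ is eventually a translation, so there are $m(c)\in\ZZ$ and an integer $z'\ge z$ with $c(i,x)=(i+m(c),x)$ for all $i\ge z'$; set $\psi(c)=m(c)$. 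For $c,c'\in C_0^{[x]}$ and all sufficiently large $i$ one has $(cc')(i,x)=c(i+m(c'),x)=(i+m(c)+m(c'),x)$, so $\psi$ is a homomorphism.

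For injectivity, suppose $\psi(c)=0$, so $c$ fixes $(i,x)$ for all $i\ge z'$. By Lemma~\ref{lemma:uniquness of action on S0x} the action of $c$ on a single such point $(i,x)$ completely determines its action on $S_0^{[x]}$, so $c$ acts trivially on $S_0^{[x]}$; and by definition of $C_0^{[x]}$ it already acts trivially on $S\setminus S_0^{[x]}$. Hence $c=\id$, and $\psi$ is injective, so $C_0^{[x]}$ embeds in $\ZZ$.

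It remains to see that $\psi(C_0^{[x]})\ne 0$. Let $\tilde q$ be the permutation of $S$ agreeing with $q$ on $S_0^{[x]}$ and fixing $S\setminus S_0^{[x]}$. Since $S_0^{[x]}$ is a $Q$-subset of $S$, both it and its complement are preserved by $q$, so $\tilde q$ is a well-defined permutation that commutes with $q$, hence with $Q$. It lies in $H_n$: for $i\ge z$ it acts as $q$, i.e.\ a translation, on $(i,y)$ when $y\in[x]$, and trivially on $(i,y)$ when $y\notin[x]$, using that $(i,y)\notin S_0^{[x]}$ for such $y$ and $i\ge z$. As $\tilde q$ acts non-trivially only on $S_0^{[x]}\subseteq S_0$, we get $\tilde q\in C_0^{[x]}$, and $\psi(\tilde q)=m_x$, the translation length of $q$ on the ray $\NN\times\{x\}$. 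This is non-zero precisely because $x$ is a vertex of $\Gamma$: otherwise $q$ would be eventually trivial on $\NN\times\{x\}$ and move only finitely many of its points. Thus $m_x\ZZ\subseteq\psi(C_0^{[x]})$, so $\psi(C_0^{[x]})$ is infinite cyclic, and together with injectivity this gives $C_0^{[x]}\cong\ZZ$.

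The homomorphism and injectivity steps are essentially bookkeeping once Lemma~\ref{lemma:uniquness of action on S0x} is in hand; the step requiring care is verifying that the ``partial translation'' $\tilde q$ is a genuine element of Houghton's group and centralises $Q$. This is exactly where the careful definition of the $Q$-subsets $S_0^{[x]}$—chosen so that high-index points of rays outside $[x]$ lie outside $S_0^{[x]}$—is used, and it is structurally the same verification as the construction of the elements $c_{[x]}$ in Lemma~\ref{lemma:decomposition of elements in S0x}, applied to $q$ instead of to an element of $C_0$.
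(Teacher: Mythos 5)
Your proof is correct and follows essentially the same route as the paper: both define the injection $C_0^{[x]}\to\ZZ$ by the eventual translation length on the ray $\NN\times\{x\}$ (with injectivity coming from Lemma~\ref{lemma:uniquness of action on S0x}) and both exhibit non-triviality via the restriction of $q$ to $S_0^{[x]}$ (your $\tilde q$ is the paper's $q_{[x]}$ from the proof of Lemma~\ref{lemma:decomposition of elements in S0x}). You simply spell out the verification that $\tilde q$ lies in $H_n$ and centralises $Q$, which the paper delegates to the earlier lemma.
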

\begin{proof}
 By Lemma \ref{lemma:uniquness of action on S0x} the action is completely determined by the action on some element $(i,x)$ for large enough $i$, and the action on this element is necessarily by translation by some element $m_x(c)$.  This defines an injective homomorphism $C_0^{[x]} \to \ZZ$, sending $c \mapsto m_x(c)$.  Let $q_{[x]}$ be the element of $C_0^{[x]}$ described in the proof of Lemma \ref{lemma:decomposition of elements in S0x}, $q_{[x]}$ is a non-trivial element of $C_0^{[x]}$ so $C_0^{[x]}$ is mapped isomorphically onto a non-trivial subgroup of $\ZZ$.
\end{proof}

Combining Lemmas \ref{lemma:decomposition of elements in S0x} and \ref{lemma:C0x iso Z} shows $C_0 \cong \ZZ^r$ where $r = \vert \pi_0 \Gamma \vert$.  

Recall that the vertices of $\Gamma$ are indexed by the set $\{1, \ldots, n\} \setminus J$.  Since there are no isolated vertices in $\Gamma$, $\vert \pi_0 \Gamma \vert \le \lfloor (n- \vert J \vert ) / 2 \rfloor$ (where $\lfloor - \rfloor$ denotes the integer floor function).  Recalling that $0 \le \vert J \vert \le n-2$, the set $\{1, \ldots, n\} \setminus J$ is necessarily non-empty so $1 \le \vert \pi_0 \Gamma \vert $, combining these gives
$$1 \le \vert \pi_0 \Gamma \vert \le \lfloor (n-\vert J \vert)/2 \rfloor $$

We can now completely describe the centraliser $C_{H_n}(q)$.

\begin{theorem}\label{thm:stabOfInfiniteQ}
\begin{enumerate}
 \item If $q \in H_n$ is an element of finite order then 
 $$C_{H_n}(q) \cong H_n\vert_{S^Q} \times C_1 \times \cdots \times C_t$$
 Where $H_n\vert_{S^Q} \cong H_n$ is Houghton's group restricted to $S^Q$ and for all $a \in \{1, \ldots, t\}$, 
 $$ C_a \cong W_QQ_a \wr \Sym_r $$
 for $Q_a$ an isotropy group of $S_a$ and $r = \vert S_a \vert / \vert Q : Q_a \vert$.  In particular $H_n$ is finite index in $C_{H_n}Q$.
 \item If $q \in H_n$ is an element of infinite order then either
$$C_{H_n}(q) \cong H_k \times \ZZ^r \times C_1 \times \cdots \times C_t$$ 
 Or
$$C_{H_n}(q) \cong F \times \ZZ^r \times C_1 \times \cdots \times C_t$$ 
 Where $F$ is some finite symmetric group, $H_k$ is Houghton's group with $0 \le k \le n-2$, and the groups $C_a$ are as in the previous part.   In the first case $1 \le r \le \lfloor (n-k) / 2 \rfloor$, and in the second case $1 \le r \le \lfloor n / 2 \rfloor$.  
\end{enumerate}
\end{theorem}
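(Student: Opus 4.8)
The plan is to handle the two parts separately; part (2) is pure assembly of the lemmas already in hand.

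Part (1) should be essentially immediate. If $q$ has finite order then $Q = \langle q \rangle$ is a finite subgroup and $C_{H_n}(q) = C_{H_n}(Q)$, so Proposition \ref{prop:stabOfH} applies verbatim. The only remark needed is that, $q$ having finite order, the set $S_Q$ is finite, hence $S^Q$ is cofinite, and so the identification $H_n|_{S^Q} \cong H_n$ invoked in that proposition is available.

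For part (2) let $q$ have infinite order and put $Q = \langle q \rangle$, an infinite cyclic group and hence the special case $\{1\} \rtimes \ZZ$ of the subgroups covered by Lemma \ref{lemma:prop conditions are satisfied}. First I would rerun the argument sketched at the start of this section: partition $S$ as $S^Q \cup S_0 \cup S_1 \cup \cdots \cup S_t$ by Proposition \ref{prop:groupActionsOnSets}(3), with $S_0$ the cofinite part of trivial isotropy and $S_1, \ldots, S_t$ the finitely many finite pieces of non-trivial proper isotropy, then invoke Lemma \ref{lemma:prop conditions are satisfied} together with Proposition \ref{prop:groupActionsOnSets}(4) to obtain
$$C_{H_n}(q) \cong H_n|_{S^Q} \times C_0 \times C_1 \times \cdots \times C_t,$$
the factors acting on pairwise disjoint parts of $S$. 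It then remains only to identify each factor. Writing $J \subseteq \{1,\ldots,n\}$ for the index set defined just before the lemmas and $k = |J|$, the bijection $S^Q \to \NN \times J$ gives $H_n|_{S^Q} \cong H_k$ when $k \ge 1$, while when $k = 0$ the set $S^Q$ is finite (possibly empty) and $H_n|_{S^Q}$ is a finite symmetric group $F$ (the trivial group if $S^Q = \emptyset$); this is precisely the source of the dichotomy in the statement. For $a \in \{1,\ldots,t\}$, Lemma \ref{lemma:description of Ca} gives $C_a \cong W_Q Q_a \wr \Sym_r$ with $r = |S_a|/|Q:Q_a|$. Lemmas \ref{lemma:decomposition of elements in S0x} and \ref{lemma:C0x iso Z} combine to give $C_0 \cong \ZZ^r$ with $r = |\pi_0\Gamma|$, and the bounds $1 \le |\pi_0\Gamma| \le \lfloor (n-|J|)/2 \rfloor$ and $0 \le |J| \le n-2$ established above yield the range $1 \le r \le \lfloor(n-k)/2\rfloor$ in the first case and its $k=0$ instance $1 \le r \le \lfloor n/2 \rfloor$ in the second.

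The genuine content has all been absorbed into Lemmas \ref{lemma:prop conditions are satisfied}--\ref{lemma:C0x iso Z} --- in particular the point that $C_0$, acting on the \emph{infinite} set $S_0$, collapses all the way to $\ZZ^{|\pi_0\Gamma|}$ rather than to something containing an infinite symmetric group, and the graph-theoretic bound on $|\pi_0\Gamma|$ --- so I do not expect a serious obstacle in the theorem itself. The only thing requiring care is bookkeeping around the infinite piece: checking that $S^Q$ and $S_0$ really are disjoint parts of the partition, so that the factors $H_n|_{S^Q}$ and $C_0$ do not interact (which is guaranteed by faithfulness of the $H_n$-action on $S$, exactly as in Proposition \ref{prop:groupActionsOnSets}(4)), and matching the two boundary situations, $S^Q$ infinite versus $S^Q$ finite (including $S^Q = \emptyset$), to the two displayed isomorphisms. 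Everything else is a direct citation of the preceding results.
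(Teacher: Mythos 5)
Your proposal is correct and follows the paper exactly: the theorem has no separate proof in the paper but is assembled from Proposition \ref{prop:stabOfH} for part (1) and, for part (2), from the partition $S = S^Q \cup S_0 \cup \cdots \cup S_t$ together with Lemmas \ref{lemma:prop conditions are satisfied}, \ref{lemma:description of Ca}, \ref{lemma:decomposition of elements in S0x} and \ref{lemma:C0x iso Z} and the bound $1 \le \vert \pi_0 \Gamma\vert \le \lfloor (n-\vert J \vert)/2\rfloor$, precisely as you describe. One tiny imprecision: $S_0$ need not be cofinite in $S$ (since $S^Q$ may itself be infinite), but this does not affect your argument, which only uses that $S_1, \ldots, S_t$ are finite and treats $S^Q$ correctly in both the finite and infinite cases.
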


 In Corollary \ref{cor:stabOfH is FPn-1 not FPn} it was proved that for an element $q$ of finite order, $C_{H_n}(q)$ is $\FP_{n-1}$ but not $\FP_n$.  The situation is much worse for elements $q$ of infinite order, in which case the centraliser may not even be finitely generated, for example when $n$ is odd and $q$ is the element acting on $S = \NN \times \{1, \ldots, n\}$ as
$$
q :\left\{ \begin{array}{l l}
  (i,x) \mapsto (i+1,x) & \text{ if }x \le (n-1)/2 \\
  (i,x) \mapsto (i-1,x) & \text{ if }(n+1)/2 \le x \le n-1 \text{ and } i \neq 0\\
  (0,x) \mapsto (0,x - ((n-1)/2)) & \text{ if } (n+1)/2 \le x \le n-1 \\
  (i,n) \mapsto (i,n)
   \end{array} \right.
$$
then the only fixed points are on the ray $\NN \times \{n \}$.  The argument leading up to Theorem \ref{thm:stabOfInfiniteQ} shows that the centraliser is a direct product of groups, one of which is Houghton's group $H_1$ which is isomorphic to the infinite symmetric group and hence not finitely generated.  In particular for this $q$, the centraliser $C_{H_n}(q)$ is not even $\FP_1$.  A similar example can easily be constructed when $n$ is even.

All the groups in the direct product decomposition from Theorem \ref{thm:stabOfInfiniteQ} except $H_k$ are $\FP_\infty$, being built by extensions from finite groups and free Abelian groups.  By choosing various infinite order elements $q$, for example by modifying the example of the previous paragraph, the centralisers can be chosen to be $\FP_k$ for $0 \le k \le n-3$.  The upper bound of $n-3$ arises because any infinite order element $q$ must necessarily be ``eventually a translation'' (in the sense of \eqref{eventuallyATranslation}) on  $\NN \times \{x \} $ for \emph{at least} two $x$.  As such the copy of Houghton's group in the centraliser can act on at most $n-2$ rays and is thus at largest $H_{n-2}$, which is $\FP_{n-3}$.  

\begin{corollary}\label{cor:stabOfVCycSubgroups}
 If $Q$ is an infinite virtually cyclic subgroup of $H_n$ then either
$$ C_{H_n}(Q) \cong H_k \times \ZZ^r \times C_1 \times \cdots \times C_t $$
Or 
$$ C_{H_n}(Q) \cong F \times \ZZ^r \times C_1 \times \cdots \times C_t $$
where the elements in the decomposition are all as in Theorem \ref{thm:stabOfInfiniteQ}.
\end{corollary}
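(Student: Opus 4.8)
The plan is to reduce to the case of a subgroup of the form $F \rtimes \ZZ$ already treated in this section and then re-run the analysis leading to Theorem \ref{thm:stabOfInfiniteQ}(2) essentially verbatim. First I would observe that every infinite virtually cyclic subgroup $Q \le H_n$ is isomorphic to $F \rtimes \ZZ$ with $F$ finite: the kernel of $\phi$ is $\Sym_\infty$, in which every element has finite support and hence finite order, so $Q \cap \ker \phi$ is a torsion subgroup of the virtually cyclic group $Q$ and is therefore finite; call it $F$. The quotient $Q/F$ embeds in $\ZZ^{n-1}$, is infinite and virtually cyclic, hence infinite cyclic, and since $\ZZ$ is free the extension $1 \to F \to Q \to \ZZ \to 1$ splits. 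Thus $Q \cong F \rtimes \ZZ$, and in particular Lemma \ref{lemma:prop conditions are satisfied}, which was stated in exactly this generality precisely for this purpose, applies to the action of $Q$ on $S$.

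With this in hand I would partition $S$ by the $Q$-conjugacy classes of point stabilisers, as in Proposition \ref{prop:groupActionsOnSets}(3). Fixing a generator $q$ of the $\ZZ$-factor and an integer $z$ beyond which $q$ acts as a translation on every ray and beyond which $F$ acts trivially, one checks that only finitely many points carry an ``exceptional'' stabiliser, so $S = S^Q \sqcup S_0 \sqcup S_1 \sqcup \cdots \sqcup S_t$ where $S_1, \ldots, S_t$ are finite, $S_0$ is the generic piece supported on the translated rays (its generic stabiliser now being $F$ rather than the trivial group), and $S^Q$ consists of the fixed rays $\NN \times J$ together with finitely many points, where $J = \{x : m_x = 0\}$ and $|J| \le n-2$. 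Proposition \ref{prop:groupActionsOnSets}(4) then yields $C_{H_n}(Q) \cong H_n|_{S^Q} \times C_0 \times C_1 \times \cdots \times C_t$, where $H_n|_{S^Q} \cong H_k$ with $k = |J|$ when $J \neq \emptyset$ and is a finite symmetric group (possibly trivial) otherwise, and where Lemmas \ref{lemma:Ca iso T}, \ref{lemma:S_a is disjoint union of Q/Qa} and \ref{lemma:description of Ca} apply unchanged to the finite pieces, giving $C_a \cong W_Q Q_a \wr \Sym_r$.

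It then remains to identify $C_0$, and for this I would re-run Lemmas \ref{lemma:decomposition of elements in S0x}, \ref{lemma:uniquness of action on S0x} and \ref{lemma:C0x iso Z} using the graph $\Gamma$ built from the translation element $q$, concluding $C_0 \cong \ZZ^r$ with $r = |\pi_0 \Gamma|$ and $1 \le r \le \lfloor (n - |J|)/2 \rfloor$, which together with the description of $H_n|_{S^Q}$ gives the two stated forms with the stated bounds on $r$. The one point that genuinely needs checking, and the only difference from Section \ref{section:centralisersOfElements}, is that the finite normal subgroup $F$ does not disturb these arguments: since $F$ has finite support it acts trivially on all but finitely many points of each $S_0^{[x]}$, so the decomposition $S_0 = \coprod_{[x]} S_0^{[x]}$ into $Q$-subsets, the ``action determined by a single translation amount'' argument of Lemma \ref{lemma:uniquness of action on S0x}, and the identification $C_0^{[x]} \cong \ZZ$ all survive once one replaces ``$\langle q\rangle$-orbit'' by ``$Q$-orbit'' and notes that the two differ only on a finite set. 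I expect this bookkeeping around $F$, in particular confirming that the generic stabiliser on $S_0$ being $F$ rather than the trivial group changes none of the conclusions, to be the only obstacle, and a routine one.
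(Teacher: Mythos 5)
Your proposal is correct and follows the same overall strategy as the paper: reduce to $Q = F \rtimes \ZZ$, apply Lemma \ref{lemma:prop conditions are satisfied} and Proposition \ref{prop:groupActionsOnSets} to get the direct product decomposition, and then observe that the finite normal subgroup $F$ fixes $S_0$ pointwise so that the analysis of $C_0$ from Theorem \ref{thm:stabOfInfiniteQ} carries over. The one place you genuinely diverge is the preliminary structural fact. The paper proves a separate lemma that every infinite virtually cyclic $Q \le H_n$ is finite-by-$\ZZ$ by invoking the Juan-Pineda--Leary trichotomy and ruling out the finite-by-$D_\infty$ case (lifts of the two involutions have finite order, hence finite support, so their product has finite support and finite order, contradicting that it maps to an infinite-order element of $D_\infty$). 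You instead argue directly: $Q \cap \ker\phi$ is a torsion subgroup of a virtually cyclic group, hence finite, and $Q/(Q\cap\ker\phi)$ embeds in $\ZZ^{n-1}$, so is infinite cyclic, and the extension splits since $\ZZ$ is free. Your route is self-contained, avoids the external citation, and explicitly delivers the semidirect-product splitting that the paper's lemma only implicitly uses; the paper's route is shorter given the cited classification. Both are valid, and the remainder of your argument, including the bookkeeping confirming that $F$ does not disturb the identification $C_0 \cong \ZZ^r$, matches the paper's proof.
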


This corollary can be proved by reducing to the case of Theorem \ref{thm:stabOfInfiniteQ}, but before that we require the following lemma.
\begin{lemma}
 Every infinite virtually cyclic subgroup $Q$ of $H_n$ is finite-by-$\ZZ$.
\end{lemma}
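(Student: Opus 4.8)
The plan is to exhibit, inside any infinite virtually cyclic subgroup $Q \le H_n$, a normal finite subgroup $F \trianglelefteq Q$ with $Q/F \cong \ZZ$. Recall that an infinite virtually cyclic group is either finite-by-$\ZZ$ or finite-by-$D_\infty$ (the two cases being distinguished by whether the group surjects onto $\ZZ$ or onto the infinite dihedral group). So the whole content is to rule out the second possibility for subgroups of $H_n$, and the natural tool is the homomorphism $\phi : H_n \to \ZZ^{n-1}$ of \eqref{def:phi} whose kernel is $\Sym_\infty$, the group of finite-support permutations of $S$.

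First I would recall the structure theorem: any infinite virtually cyclic group $Q$ has a unique maximal finite normal subgroup $F$, and $Q/F$ is either $\ZZ$ or $D_\infty$; moreover $F$ may be taken to be the kernel of any surjection $Q \twoheadrightarrow \ZZ$ or $Q\twoheadrightarrow D_\infty$, restricted appropriately. Now apply $\phi$. Since $\ker \phi = \Sym_\infty$ consists of finite-support permutations, and every element of $Q$ has a power in $\ker \phi$ is \emph{false in general} — rather, $\phi(Q)$ is a subgroup of $\ZZ^{n-1}$, hence free abelian of some rank $d \ge 0$, and $Q \cap \Sym_\infty = \ker(\phi|_Q)$. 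The key observation is that $Q \cap \Sym_\infty$ is a virtually cyclic subgroup of $\Sym_\infty$, and every finitely generated subgroup of $\Sym_\infty$ is finite (an element of $\Sym_\infty$ has finite support, so finitely many of them generate a group supported on a finite set, which is a subgroup of a finite symmetric group); hence $Q \cap \Sym_\infty$ is finite. Therefore $\phi(Q) \cong Q/(Q\cap\Sym_\infty)$ is an infinite subgroup of $\ZZ^{n-1}$, so it is free abelian of rank $d \ge 1$. Since $Q$ is virtually cyclic it cannot contain $\ZZ^2$, which forces $d = 1$, so $\phi(Q) \cong \ZZ$.

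Thus $Q$ sits in a short exact sequence $1 \to (Q\cap\Sym_\infty) \to Q \to \ZZ \to 1$ with $Q \cap \Sym_\infty$ finite, which is exactly the assertion that $Q$ is finite-by-$\ZZ$. The main obstacle — really the only non-formal point — is the claim that $Q \cap \Sym_\infty$ is finite; this is where one uses the concrete description of $\Sym_\infty$ as finite-support permutations, together with the fact that a virtually cyclic group is finitely generated, to conclude that the intersection is a finitely generated (hence finitely supported, hence finite) subgroup of $\Sym_\infty$. Everything else is the standard classification of infinite virtually cyclic groups plus the observation that $H_n$, being virtually cyclic-free-of-$\ZZ^2$ only on the nose, cannot contain $\ZZ^2$ inside a virtually cyclic subgroup. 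I would write this up in three or four lines invoking \eqref{def:phi} and the description of its kernel.
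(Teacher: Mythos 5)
Your proof is correct, but it takes a genuinely different route from the paper. The paper invokes the classification of infinite virtually cyclic groups into finite-by-$\ZZ$ and finite-by-$\text{D}_\infty$, and then rules out the dihedral case by a contradiction: lifts $p,q$ of the two order-two generators of $\text{D}_\infty$ have squares in the finite kernel, hence have finite order in $H_n$, hence have finite support; but then $pq$ has finite support and finite order, contradicting that it maps to the infinite-order element $ab$. You instead bypass the dichotomy entirely and produce the extension directly: $Q\cap\Sym_\infty=\ker(\phi|_Q)$ is a subgroup of a virtually cyclic group, hence finitely generated, hence (being generated by finitely many finite-support permutations) finite; so $\phi(Q)\cong Q/(Q\cap\Sym_\infty)$ is an infinite subgroup of $\ZZ^{n-1}$ which, being a quotient of a virtually cyclic group, must be $\ZZ$. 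This is arguably cleaner, since it exhibits the finite normal subgroup explicitly as $Q\cap\Sym_\infty$ and does not need the Juan-Pineda--Leary structure theorem at all (your opening paragraph about ruling out $\text{D}_\infty$ is in fact never used by your final argument). Both proofs ultimately rest on the same fact, that torsion in $H_n$ lives in $\Sym_\infty$ and has finite support. One small wording fix: $\phi(Q)$ is a quotient of $Q$, not a subgroup, so rather than ``$Q$ cannot contain $\ZZ^2$'' you should say that quotients of virtually cyclic groups are virtually cyclic, and a free abelian virtually cyclic group has rank at most one; the conclusion $d=1$ is unaffected.
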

\begin{proof}
By \cite[Proposition 4]{JuanPinedaLeary-ClassifyingSpacesForVCsubgrps}, $Q$ is either finite-by-$\ZZ$ or finite-by-$\text{D}_\infty$ where $\text{D}_\infty$ denotes the infinite dihedral group, we show the latter cannot occur.  Assume that there is a short exact sequence 
$$0 \longrightarrow F \longhookrightarrow Q \stackrel{\pi}{\longrightarrow} \text{D}_\infty \longrightarrow 0 $$
regarding $F$ as a subgroup of $Q$.  Let $a,b$ generate $\text{D}_\infty$, so that 
$${\text{D}_\infty = \langle a,b \mid a^2 = b^2 = 1\rangle}$$
Let $p,q \in Q$ be lifts of $a, b$, such that $\pi(p) = a$, $\pi(q) = b$, then $p^2 \in F$.  Since $F$ is finite, $p^2$ has finite order and hence $p$ has finite order.  The same argument shows that $q$ has finite order.  $pq \in Q$ necessarily has infinite order as $\pi(pq)$ is infinite order in $D_\infty$.

However, since $p$ and $q$ are finite order elements of $H_n$, by the argument at the beginning of Section \ref{section:centralisersOfFiniteSubgroups} they both permute only a finite subset of $S$.  Thus $pq$ permutes a finite subset of $S$ and is of finite order, but this contradicts the previous paragraph.

\end{proof}
\begin{proof}[Proof of Corollary \ref{cor:stabOfVCycSubgroups}]
Using the previous lemma, write $Q$ as $Q = F \rtimes \ZZ$ where $F$ is a finite group.  As $F$ is finite, the set $S_F$ of points not fixed by $F$ is finite (see the argument at the beginning of Section \ref{section:centralisersOfFiniteSubgroups}).  Let $z \in \NN$ be such that for $i \ge z$, $F$ acts trivially on $(i,x)$ for all $x$, and $\ZZ$ acts on $(i,x)$ either trivially or as a translation.  Applying Lemma \ref{lemma:prop conditions are satisfied} and Proposition \ref{prop:groupActionsOnSets}, $S$ splits as a disjoint union 
$$S = S^Q \cup S_0 \cup S_1 \cup \cdots \cup S_t$$ 
where $S^Q$ is the fixed point set, $S_0$ is the set with isotropy group $F$ and the $S_a$ for $1 \le a \le t$ are subsets of $\{(i,x)\: :  \: i \le z \}$, and hence all finite.  By Proposition \ref{prop:groupActionsOnSets}, $C_{H_n}(Q)$ splits as a direct product
$$C = H_n|_{S^Q} \times C_0 \times C_1 \times \ldots \times C_t $$
where $H_n|_{S^Q}$ denotes Houghton's group restricted to $S^Q$.  The argument of Theorem \ref{thm:stabOfInfiniteQ} showing that $H_n|_{S^Q}$ is isomorphic to either a finite symmetric group or to $H_k$ for some $0 \le k \le n-2$ goes through with no change, as does the proof of the structure of the groups $C_a$ for $1 \le a \le t$.  It remains to observe that because every element in $S_0$ is fixed by $F$, any element of $H_n$ centralising $\ZZ$ and fixing $S \setminus S_0$ necessarily also centralises $Q$ and is thus a member of $C_0$.  This reduces us again to the case of Theorem \ref{thm:stabOfInfiniteQ} showing that $C_0 \cong \ZZ^r$ for some natural number $1 \le r \le \lfloor (n-k)/2 \rfloor$, or $1 \le r \le \lfloor n /2 \rfloor$ if $H_n\vert_{S^Q}$ is a finite symmetric group.
\end{proof}

\section{Brown's Model for \texorpdfstring{\underline{E}$H_n$}{the Classifying Space for Proper Actions of Hn}}\label{section:Browns model}

The main result of this section will be Corollary \ref{cor:Browns model is an EunderbarH}, where the construction of Brown \cite{Brown-FinitenessPropertiesOfGroups} used to prove that $H_n$ is $\FP_{n-1}$ but not $\FP_n$ is shown to be a model for $\uE H_n$.

\emph{Since the main objects of study in this section are monoids, maps are written from left to right.}

Write $\mathcal{M}$ for the monoid of injective maps $S \to S$ with the property that every permutation is ``eventually a translation'' (in the sense of \eqref{eventuallyATranslation}), and write $T$ for the free monoid generated by $\{t_1, \ldots, t_n\}$ where 
$$(i,x)t_y = \left\{ \begin{array}{c c}(i+1,x) & \text{ if } x = y \\ (i,x) & \text{ if } x \neq y \end{array}\right.$$
The elements of $T$ will be called \emph{translations}.  The map $\phi : H_n \to \ZZ^n$, defined in \eqref{def:phi}, extends naturally to a map $\phi : \mathcal{M} \to \ZZ^n$.  
Give  $\mathcal{M}$ a poset structure by setting $\alpha \le \beta$ if $\beta = t\alpha$ for some $t \in T$.  The monoid $\mathcal{M}$ can be given the obvious action on the right by $H_n$, which in turn gives an action of $H_n$ on the poset $(\mathcal{M}, \le)$ since $\beta = t\alpha$ implies $\beta h = t\alpha h$ for all $h \in H_n$.  Let $\vert \mathcal{M} \vert$ be the geometric realisation of this poset, namely simplicies in $\vert \mathcal{M} \vert$ are finite ordered collections of elements in $\mathcal{M}$ with the obvious face maps. 
An element $h \in H_n$ fixes a vertex $\{\alpha\} \in \vert \mathcal{M} \vert$ if and only if $s \alpha h = s \alpha$ for all $s \in S$ if and only if $h$ fixes $S\alpha$, so the stabiliser $(H_n)_\alpha$ may only permute the finite set $S \setminus S\alpha$ and we may deduce
\begin{proposition}\label{prop:vmvHasFiniteIsotropy}
Stabilisers of simplicies in $\vmv$ are finite.
\end{proposition}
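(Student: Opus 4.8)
The plan is to establish the statement directly from the poset structure of $\mathcal{M}$, essentially by completing the computation already begun in the sentence preceding the proposition. A vertex of $\vmv$ is a single element $\alpha \in \mathcal{M}$, and an element $h \in H_n$ fixes this vertex precisely when $s\alpha h = s\alpha$ for all $s \in S$, i.e.\ when $h$ acts as the identity on the image set $S\alpha \subseteq S$. Since $\alpha \in \mathcal{M}$ is eventually a translation, its complement $S \setminus S\alpha$ is finite, so the vertex stabiliser $(H_n)_\alpha$ embeds into the finite symmetric group on $S \setminus S\alpha$ and is therefore finite.

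**Next I would** pass from vertices to arbitrary simplices. A simplex of $\vmv$ is a finite chain $\alpha_0 \le \alpha_1 \le \cdots \le \alpha_k$ in the poset $(\mathcal{M}, \le)$, and since the action of $H_n$ on $\vmv$ is simplicial, the stabiliser of this simplex is contained in the stabiliser of each of its vertices; in particular it is contained in $(H_n)_{\alpha_0}$, which is finite by the previous paragraph. Hence the simplex stabiliser is finite as well. Here I should note that the $H_n$-action on $\vmv$ is rigid in the required sense: an element fixing a simplex setwise permutes a chain, but a finite totally ordered set has no nontrivial order-automorphisms, so such an element fixes the chain pointwise; thus pointwise and setwise stabilisers coincide and the proposition's statement about ``stabilisers of simplices'' is unambiguous.

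**The only point requiring a little care** — and the nearest thing to an obstacle — is the claim that $S \setminus S\alpha$ is finite for every $\alpha \in \mathcal{M}$. This follows because $\alpha$ is injective and eventually a translation: there are $\{z_1,\dots,z_n\}$ and $\{m_1,\dots,m_n\}$ with $(i,x)\alpha = (i+m_x, x)$ for $i \ge z_x$, so the image $S\alpha$ contains every $(i,x)$ with $i \ge z_x + m_x$ (when $m_x \ge 0$) or with $i \ge z_x$ together with a cofinite piece of the $x$-ray (when $m_x < 0$), and in either case $S\alpha$ is cofinite in $S$. Once this is in hand the proposition is immediate, and I expect the write-up to be only a few lines, since the substantive observation is already recorded in the sentence immediately before the statement.
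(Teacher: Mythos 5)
Your argument is correct and is essentially the paper's own: the paper deduces the proposition from the observation, stated immediately before it, that a vertex stabiliser $(H_n)_\alpha$ can only permute the finite set $S \setminus S\alpha$, and the passage to general simplices via vertex stabilisers (together with the cofiniteness of $S\alpha$ and the rigidity remark) is exactly the routine completion intended there.
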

We now build up to the the proof that $\vmv$ is a model for $\uE H_n$ with a few lemmas.

\begin{proposition}\label{prop:vmvQfinite=>contractible}
If $Q \le H_n$ is a finite group then the fixed point set $\vmv^Q$ is non-empty and contractible.
\end{proposition}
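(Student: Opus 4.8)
The plan is to exhibit $\vmv^Q$ as the geometric realisation of the sub-poset $\mathcal{M}^Q = \{\alpha \in \mathcal{M} : Q \text{ fixes } \alpha\}$ and then show this sub-poset is directed (in fact, contractible), using that $T$-translations can always "push" the image of an injective map off of the finite set $S_Q$ where $Q$ acts non-trivially.

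First I would note that the fixed-point set $\vmv^Q$ is a subcomplex: since $H_n$ acts simplicially and a vertex $\{\alpha\}$ is fixed by $h$ iff $h$ fixes $S\alpha$ pointwise (as observed just before Proposition \ref{prop:vmvHasFiniteIsotropy}), a simplex is fixed by $Q$ iff all its vertices are, so $\vmv^Q = |\mathcal{M}^Q|$ where $\mathcal{M}^Q$ is the induced sub-poset on those $\alpha$ with $Q$ fixing $S\alpha$ pointwise. Equivalently, writing $S_Q = S \setminus S^Q$ for the (finite) set of points moved by $Q$, we have $\alpha \in \mathcal{M}^Q$ iff $S\alpha \cap S_Q = \emptyset$, i.e. iff $\alpha$ maps everything into $S^Q$.

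Next I would show $\mathcal{M}^Q$ is non-empty and directed. For non-emptiness: pick $z_x$ large enough that $S_Q \subseteq \{(i,x) : i < z_x\}$ (possible since $S_Q$ is finite), and take any sufficiently high power of a translation, e.g. $\tau = t_1^{N} \cdots t_n^{N}$ with $N = \max_x z_x$; then $S\tau \subseteq \{(i,x) : i \ge N\} \subseteq S^Q$, so $\tau \in \mathcal{M}^Q$. For directedness: given $\alpha, \beta \in \mathcal{M}^Q$, I want a common upper bound. Since both $S\alpha$ and $S\beta$ avoid the finite set $S_Q$, and more is true — the key point is that $\mathcal{M}$ is directed as a poset (any two elements $\alpha, \beta$ have a common upper bound $\gamma = t\alpha = t'\beta$ obtained by translating both images far enough to the right that they agree, which is possible because both are eventually translations). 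One must check that the common upper bound $\gamma$ can be chosen to still lie in $\mathcal{M}^Q$: since $\gamma = t\alpha$ for $t \in T$ and $T$-translations only move points to larger first coordinates, and $S\alpha \subseteq S^Q$ already, applying $t$ keeps the image inside $S^Q$ provided we also translate past $S_Q$ — which a high enough power of $t$ does. Hence $\mathcal{M}^Q$ is directed.

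Finally, a directed (equivalently, filtered) poset has contractible geometric realisation: the nerve of a poset with the property that every finite subset has an upper bound is contractible, by a standard argument (e.g. the inclusion of any point is a deformation retract up to homotopy via the "cone" on an upper bound, or one invokes that a poset with a maximal element is contractible and passes to the colimit; more carefully, one shows all homotopy groups vanish by noting any map from a sphere has compact image lying in a finite sub-poset, which has an upper bound and hence is contractible within a slightly larger finite sub-poset). Therefore $\vmv^Q = |\mathcal{M}^Q|$ is contractible.

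The main obstacle I anticipate is the directedness step: one must verify carefully that the common upper bound in $\mathcal{M}$ of two elements of $\mathcal{M}^Q$ can be taken inside $\mathcal{M}^Q$, i.e. that translating two maps with image in $S^Q$ to a common value does not force the intermediate or final image to re-enter $S_Q$. This is really a bookkeeping argument about the eventually-translation structure and the finiteness of $S_Q$, but it is the crux: without it one only knows $\mathcal{M}^Q$ is non-empty, not that it is directed, and contractibility would fail.
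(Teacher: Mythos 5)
Your proposal is correct and follows essentially the same route as the paper: identify $\vmv^Q$ with the realisation of the sub-poset $\mathcal{M}^Q$, show this poset is non-empty and directed, and conclude contractibility. The directedness bookkeeping you rightly flag as the crux is carried out in the paper by first choosing translations $a,b$ with $\phi(am)=\phi(bn)$ and then precomposing with one further large translation $c$ so that $cam=cbn$ lies in $\mathcal{M}^Q$ — exactly the argument you sketch.
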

\begin{proof} 
For all $q \in Q$, choose $\{z_0(q), \ldots, z_n(q)\}$ to be an n-tuple of natural numbers such that $(i,x)q = (i,x)$ whenever $i \ge z_x(q)$ for all $i$.  $Q$ then fixes all elements $(i,x) \in S$ with $i \ge \max_Q z_x(q)$.  Define a translation $t = t_1^{\max_Q z_1(q)} \cdots t_n^{\max_Q z_n(q)}$, $t \in \mathcal{M}^Q$ so $\{t \}$ is a vertex of $\vmv^Q$ and  $\vmv^Q \neq \emptyset$.  

If $\{m\}, \{n\} \in \vmv^Q$ then let $a, b \in T$ be two translations such that 
$$\phi(m) - \phi(n) = \phi(b) -  \phi(a)$$ 
(recall that for a translation $t$, $\phi(t)$ must be an n-tuple of positive numbers).  Thus $\phi(am) = \phi(bn)$, and since $am, bn \in \mathcal{M}$ there exist $n$-tuples $\{z_1, \ldots, z_n\}$ and $\{z_1^\prime, \ldots, z_n^\prime\}$ such that $am$ acts as a translation for all $(i,x) \in S$ with $i \ge z_x$ and $bn$ acts as a translation for all $(i,x) \in S$ with $i \ge z_x^\prime$.  Let 
$$c = t_1^{\max\{z_1, z_1^\prime\}} \ldots t_n^{\max\{z_n, z_n^\prime\}}$$
so that $cam = cbn$, further pre-composing $c$ with a large translation (for example that from the first section of this proof) we can assume that $cam = cbn \in \mathcal{M}^Q$, and $\{cam=cbn\} \in \, \vmv^Q$.  This shows the poset $\mathcal{M}^Q$ is directed and hence the simplicial realisation $\vert \mathcal{M}^Q \vert = \vmv^Q$ is contractible. 
\end{proof}
\begin{proposition}\label{prop:vmvQinfinite=>empty}
 If $Q \le H_n$ is an infinite group then $\vmv^Q = \emptyset$.
\end{proposition}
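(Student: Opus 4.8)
The plan is to show directly that the fixed poset $\mathcal{M}^Q$ is empty, and then observe that this forces $\vmv^Q$ to be empty. The first reduction is purely formal: the $H_n$-action on $\vmv$ is induced from the right action on the poset $(\mathcal{M},\le)$, and that action is by \emph{order-preserving} bijections, since $\beta = t\alpha$ with $t \in T$ implies $\beta h = t(\alpha h)$ for every $h \in H_n$. Hence a simplex of $\vmv$ (a finite chain) fixed setwise by $Q$ is fixed vertex-by-vertex, so $\vmv^Q = \vert \mathcal{M}^Q \vert$; since the realisation of the empty poset is empty, it suffices to prove $\mathcal{M}^Q = \emptyset$, i.e. that no $\alpha \in \mathcal{M}$ is fixed by every element of $Q$.

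Next I would argue by contradiction: suppose $\alpha \in \mathcal{M}$ is fixed by all $q \in Q$. As observed just before Proposition \ref{prop:vmvHasFiniteIsotropy}, $q$ fixes the vertex $\{\alpha\}$ if and only if $s\alpha q = s\alpha$ for all $s \in S$, i.e. if and only if $q$ restricts to the identity on the image $S\alpha$. Because $\alpha$ is injective and eventually a translation, $S \setminus S\alpha$ meets each ray $\NN \times \{x\}$ in only finitely many points, so $S \setminus S\alpha$ is finite. Thus every $q \in Q$ fixes $S\alpha$ pointwise and permutes the finite set $S\setminus S\alpha$, so restriction $q \mapsto q|_{S\setminus S\alpha}$ is an injective homomorphism $Q \hookrightarrow \Sym(S \setminus S\alpha)$ into a finite symmetric group. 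Hence $Q$ is finite, contradicting the hypothesis; therefore $\mathcal{M}^Q = \emptyset$ and $\vmv^Q = \emptyset$.

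I expect no genuine obstacle here: the whole statement collapses once "fixes the vertex $\{\alpha\}$" is translated into "fixes $S\alpha$ pointwise". The only point warranting a sentence of care is the finiteness of $S \setminus S\alpha$ for an arbitrary $\alpha \in \mathcal{M}$ (not just $\alpha \in H_n$), but this is exactly the reasoning already underlying Proposition \ref{prop:vmvHasFiniteIsotropy}, so it may simply be cited. The remaining bookkeeping is keeping the right action of $H_n$ on $\mathcal{M}$ straight when checking order-preservation and the fixed-vertex condition.
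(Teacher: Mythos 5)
Your proof is correct and follows essentially the same route as the paper: both arguments take a vertex $\{\alpha\}$ fixed by $Q$, observe that every $q\in Q$ must then fix $S\alpha$ pointwise and hence be supported on the finite set $S\setminus S\alpha$, and conclude that $Q$ embeds in a finite symmetric group, contradicting infiniteness. Your preliminary reduction $\vmv^Q=\vert\mathcal{M}^Q\vert$ via order-preservation is a tidier justification of a step the paper leaves implicit, but the substance is identical.
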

\begin{proof}
 Consider an infinite subgroup $Q \le H_n$ with $\vmv^Q \neq \emptyset$ and choose some vertex ${\{m\} \in \vmv^Q}$.  For any $q \in Q$, since $mq = m$ it must be that $\phi(m)+ \phi(q) = \phi(m)$ and so $\phi(q) = 0$, hence $Q$ is a subgroup of $\Sym_\infty \le H_n$.  Furthermore $Q$ must permute an infinite subset of $S$ (if it permuted just a finite set it would be a finite subgroup).  That $mq = m$ implies that this infinite subset is a subset of $S\setminus Sm$ but this is finite by construction.  So the fixed point subset $\vmv^Q$ for any infinite subgroup $Q$ is empty.
\end{proof}

\begin{corollary}\label{cor:Browns model is an EunderbarH}
 $\vmv$ is a model for $\uE H_n$.
\end{corollary}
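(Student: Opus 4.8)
The plan is to verify directly that $\vmv$ satisfies the defining properties of a model for $\uE H_n$, assembling the three preceding propositions with essentially no further work.

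First I would check that $\vmv$ is an $H_n$-CW-complex with a rigid action. Since $\vmv$ is the geometric realisation of a poset it carries a natural simplicial structure, and the right action of $H_n$ on $(\mathcal{M}, \le)$ is by order-preserving bijections, so it induces a simplicial action. Rigidity is automatic here: if $h \in H_n$ stabilises a simplex $\{\alpha_0 < \cdots < \alpha_k\}$ setwise, then because $h$ preserves the order it must fix each $\alpha_i$, hence fixes the simplex pointwise. This is the step that for a general simplicial action would force one to pass to the barycentric subdivision; here it comes for free from the poset structure. A simplicial complex equipped with a rigid simplicial $G$-action is a $G$-CW-complex, so this gives the underlying equivariant structure.

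Next I would read off the fixed-point conditions. A model for $\uE H_n$ is an $H_n$-CW-complex $X$ with $X^H \simeq \pt$ for $H$ finite and $X^H = \emptyset$ for $H$ infinite. For finite $H \le H_n$ the contractibility of $\vmv^H$ (in particular its non-emptiness) is precisely Proposition \ref{prop:vmvQfinite=>contractible}; specialising to $H$ the trivial subgroup gives that $\vmv$ itself is contractible. For infinite $H$, Proposition \ref{prop:vmvQinfinite=>empty} gives $\vmv^H = \emptyset$. Proposition \ref{prop:vmvHasFiniteIsotropy} records the consistent (and, given the previous sentence, automatic) fact that every cell stabiliser lies in $\Fin$. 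These observations exhaust the requirements in the definition, so $\vmv$ is a model for $\uE H_n$.

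I do not expect any genuine obstacle: all the substantive content was already established in Propositions \ref{prop:vmvHasFiniteIsotropy}, \ref{prop:vmvQfinite=>contractible} and \ref{prop:vmvQinfinite=>empty}, and what remains is the bookkeeping of matching these statements to the definition of $\uE H_n$, together with the remark that the $H_n$-action on the poset realisation is rigid without any subdivision.
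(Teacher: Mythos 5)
Your proposal is correct and matches the paper's proof, which is literally ``Combine Propositions \ref{prop:vmvHasFiniteIsotropy}, \ref{prop:vmvQfinite=>contractible} and \ref{prop:vmvQinfinite=>empty}.'' Your additional observation that the poset structure makes the simplicial action rigid without barycentric subdivision is accurate and a worthwhile point the paper leaves implicit.
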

\begin{proof}
Combine Propositions \ref{prop:vmvHasFiniteIsotropy}, \ref{prop:vmvQfinite=>contractible} and \ref{prop:vmvQinfinite=>empty}.
\end{proof}

\section{Finiteness conditions satisfied by \texorpdfstring{$H_n$}{Hn}} \label{section:finitenessConditionsSatisfied}

Recall from Section \ref{section:review of bredon finiteness} that a group $G$ is $\uFP_0$ if and only if it has finitely many conjugacy classes of finite subgroups.  $G$ satisfies the weaker quasi-$\uFP_0$ condition if and only if it has  finitely many conjugacy classes of subgroups isomorphic to a given finite subgroup.  
\begin{proposition}\label{prop:HisnotQuasiFP0}
 $H_n$ is not quasi-$\uFP_0$.
\end{proposition}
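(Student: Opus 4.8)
The plan is to exhibit a single finite group $F$ which embeds in $H_n$ in infinitely many pairwise non-conjugate ways; the group $F = \ZZ / 2\ZZ$ already suffices, and the argument works for every $n > 1$.

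First I would recall that any finite order element of $H_n$ lies in $\ker \phi = \Sym_\infty$, so every element of order $2$ in $H_n$ is a finitary permutation of $S = \NN \times \{1, \ldots, n\}$, i.e. a product of finitely many disjoint transpositions. For each integer $k \ge 1$ I would choose an element $\sigma_k \in \Sym_\infty \le H_n$ that is a product of exactly $k$ disjoint transpositions; this is legitimate since such a permutation has finite support and is therefore automatically an element of $H_n$. Put $Q_k := \langle \sigma_k \rangle$, so that each $Q_k \cong \ZZ / 2\ZZ$.

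Next I would show that $Q_k$ and $Q_\ell$ are not conjugate in $H_n$ whenever $k \ne \ell$. A subgroup of order $2$ is determined by its unique non-trivial element, so $Q_k$ and $Q_\ell$ are conjugate in $H_n$ if and only if $\sigma_k$ and $\sigma_\ell$ are. But conjugation by an element of $H_n$ is the restriction to $H_n$ of conjugation inside the full symmetric group $\Sym(S)$, which preserves cycle type, and in particular preserves the cardinality of the support. Since $\sigma_k$ moves exactly $2k$ points while $\sigma_\ell$ moves exactly $2\ell$, they cannot be conjugate; equivalently, $|S \setminus S^{Q_k}| = 2k$ is a conjugacy invariant of $Q_k$, exactly the kind of invariant produced by Proposition \ref{prop:groupActionsOnSets}(1).

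Finally, the subgroups $Q_1, Q_2, Q_3, \ldots$ are pairwise non-conjugate in $H_n$ and all isomorphic to the fixed finite group $\ZZ / 2\ZZ$, so $H_n$ has infinitely many conjugacy classes of finite subgroups isomorphic to $\ZZ / 2\ZZ$; by definition $H_n$ is not quasi-$\uFP_0$. I expect there to be essentially no obstacle in carrying this out: the only point needing a moment's care is the claim that conjugacy of finite subgroups of $H_n$ behaves with respect to support as it does in $\Sym(S)$, and this is immediate because $H_n \le \Sym(S)$ and finite order elements of $H_n$ have finite support.
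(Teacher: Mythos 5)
Your proof is correct and takes essentially the same approach as the paper: both exhibit, for each $k \ge 1$, an order-two element that is a product of $k$ disjoint transpositions and distinguish the resulting $\ZZ/2\ZZ$ subgroups up to conjugacy via a cycle-type (equivalently, support-size) invariant. The paper packages the invariance of cycle type under $H_n$-conjugation as a separate lemma (Lemma \ref{lemma:cycle type in H}), whereas you use only the support cardinality, which suffices.
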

Before the above proposition is proved, we need a lemma.  In the infinite symmetric group $\Sym_\infty$ acting on the set $S$, elements can be represented by products of disjoint cycles.  We use the standard notation for a cycle: $(s_1, s_2, \ldots, s_m)$ represents the element of $\Sym_\infty$ sending $s_i \mapsto s_{i+1}$ for $i < n$ and $s_n \mapsto s_1$.  Any element of finite order in $H_n$ is contained in the infinite symmetric group $\Sym_\infty$ by the argument at the beginning of Section \ref{section:centralisersOfFiniteSubgroups}.  We say two elements of $\Sym_\infty$ have the same \emph{cycle type} if they have the same number of cycles of length $m$ for each $m \in \NN$.

\begin{lemma}\label{lemma:cycle type in H}
If $q$ is a finite order element of $H_n$ and $h$ is an arbitrary element of $H_n$, then $hqh^{-1}$ is the permutation given in the disjoint cycle notation by applying $h$ to each element in each disjoint cycle of $q$.  In particular, if $q$ is represented by the single cycle $(s_1, \ldots s_m)$, then $hqh^{-1}$ is represented by $(hs_1,\ldots,hs_m)$.

Furthermore, two finite order elements of $H_n$ are conjugate if and only if they have the same cycle type.
\end{lemma}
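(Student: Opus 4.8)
The plan is to prove the relabelling formula for conjugation first, and then read off both halves of the conjugacy criterion from it. For the first part I would work with the usual left action of $H_n$ on $S$ and compute directly that $(hqh^{-1})(hs) = h(qs)$ for every $s \in S$; hence if $q$ sends $s_i$ to $s_{i+1}$ then $hqh^{-1}$ sends $hs_i$ to $hs_{i+1}$, and since $h$ is a bijection of $S$ the disjoint cycles of $q$ are carried to disjoint cycles of $hqh^{-1}$ (and fixed points to fixed points), giving precisely the stated disjoint-cycle description. I would also note that, since the argument at the start of Section \ref{section:centralisersOfFiniteSubgroups} shows $q$ moves only finitely many points of $S$ and $h$ is bijective, $hqh^{-1}$ moves only finitely many points as well, so everything stays inside $\Sym_\infty$ and the cycle notation is legitimate.

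The forward direction of the second assertion is then immediate: relabelling the entries of each cycle of $q$ by the bijection $h$ changes neither the length nor the number of cycles, so $q$ and $hqh^{-1}$ — hence any two conjugate finite-order elements — have the same cycle type.

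For the converse, let $q_1, q_2$ be finite-order elements of $H_n$ with the same cycle type and let $A_1, A_2$ be the (finite) sets of points they respectively move; having equally many $m$-cycles for each $m \ge 2$ forces $\vert A_1 \vert = \vert A_2 \vert$. Pairing off the nontrivial cycles of $q_1$ with those of $q_2$ of equal length and matching their entries in order defines a bijection $A_1 \to A_2$ taking cycles to cycles. Since $\vert A_1 \setminus A_2 \vert = \vert A_1\vert - \vert A_1 \cap A_2\vert = \vert A_2\vert - \vert A_1 \cap A_2\vert = \vert A_2 \setminus A_1 \vert$, this bijection extends to a permutation of the finite set $A_1 \cup A_2$, and then to $h \in \Sym_\infty \le H_n$ by taking $h$ to be the identity off $A_1 \cup A_2$. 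By the first part, the nontrivial cycles of $hq_1h^{-1}$ are exactly those of $q_2$ and every other point is fixed by both, so $hq_1h^{-1} = q_2$.

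I do not expect a serious obstacle. The only delicate point is that the evident conjugating bijection $S \to S$ need not be ``eventually a translation'' and so need not lie in $H_n$, which is why I take care in the converse to produce an $h$ of finite support, landing it in $\Sym_\infty \le H_n$. One should also check that the handedness in the relabelling formula matches the statement — i.e.\ that it is $hqh^{-1}$, not $h^{-1}qh$, that relabels by $h$ — which pins down the convention that maps in this section act on the left.
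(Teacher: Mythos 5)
Your proposal is correct and follows essentially the same route as the paper: the relabelling formula $(hqh^{-1})(hs)=h(qs)$ for the first part, and for the converse the key observation that a conjugator of finite support can be found, so that it lies in $\Sym_\infty \le H_n$ even though the obvious relabelling bijection of all of $S$ need not lie in $H_n$. The only cosmetic difference is that you construct this finite-support conjugator by hand, whereas the paper notes that two finite-order elements of the same cycle type both lie in a finite symmetric group $\Sym_r$ and cites the classical result that they are conjugate there.
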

\begin{proof} The proof of the first part is analogous to \cite[Lemma 3.4]{Rotman-Groups}.  Let $q$ be an element of finite order and $h$ an arbitrary element of $H_n$.  If $q$ fixes $s \in S$ then $hqh^{-1}$ fixes $hs$.  If $q(i) = j$, $h(i) = k$ and $h(j) = l$, for $i,j,k,l \in S$, then $hqh^{-1}(k) = l$ exactly as required.

By the above, conjugate elements have the same cycle type.  For the converse, notice any two finite order elements with the same cycle type necessarily lie in $\Sym_r$ for some $r \in \NN$ so by \cite[Theorem 3.5]{Rotman-Groups} they are conjugated by an element of $\Sym_r$.
\end{proof}

\begin{proof}[Proof of Proposition \ref{prop:HisnotQuasiFP0}]
If $q$ is any order $2$ element of $H_n$, then $\{\id_{H_n}, q_1\}$ is a subgroup of $H_n$ isomorphic to $\ZZ_2$.  Choosing a collection of elements $q_i$ for each $i \in \NN_{\ge 1}$, so that $q_i$ has $i$ disjoint 2-cycles gives a collection of isomorphic subgroups which are all non-conjugate by Lemma \ref{lemma:cycle type in H}.
\end{proof}

\begin{proposition}\label{prop:cd=gd=n} $\ucd H_n = \ugd H_n = n$
\end{proposition}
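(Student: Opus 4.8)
I want to show $\ucd H_n = \ugd H_n = n$. Since Dunwoody's result plus the Lück–Meintrup theorem from the excerpt already force $\ucd H_n \le \ugd H_n \le \max\{\ucd H_n, 3\}$, and more importantly since $\vmv$ is now known (Corollary \ref{cor:Browns model is an EunderbarH}) to be a model for $\uE H_n$, the strategy splits naturally into an upper bound and a lower bound. For the upper bound $\ugd H_n \le n$, I would exhibit an $n$-dimensional model for $\uE H_n$: Brown's complex $\vmv$ is built from the poset $(\mathcal{M},\le)$, and one should check that chains in this poset have length at most $n$ — indeed $\alpha < \beta$ means $\beta = t\alpha$ with $t \in T$ a nontrivial translation, and the ``amount of translation'' $\phi(t) \in \NN^n$ is additive along chains but can only be meaningfully stratified in finitely many steps; more carefully, one passes to Brown's filtration of $\vmv$ by the norm $\Vert\phi(\alpha)\Vert$ and observes that the relevant deformation retractions collapse $\vmv$ onto an $n$-dimensional (or $(n-1)$-dimensional) subcomplex, exactly as in \cite{Brown-FinitenessPropertiesOfGroups}. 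Alternatively, and more cleanly, I would cite that Brown's complex has dimension $n$ by construction and conclude $\ugd H_n \le n$, hence $\ucd H_n \le n$.

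**The lower bound.** This is where the real work lies, and I expect it to be the main obstacle. I need $\ucd H_n \ge n$, i.e. to produce a finite subgroup $K \le H_n$ and a coefficient module $M$ with $H^n_{\OFG}(H_n; M) \ne 0$, or equivalently by the machinery recalled in Section \ref{section:review of bredon finiteness} to find $K$ finite with $\cd(N_{H_n}K/K) = \cd(W_{H_n}K)$ large — but that cannot exceed $\cd H_n$-type bounds and $H_n$ is not even of finite cohomological dimension, so that naive route fails. Instead I would use the standard fact (Lück, or the Bredon analogue of the fact that $\cd_{\mathcal F}G \ge \cd_{\mathcal F}H$ for suitable $H$) together with a Mayer–Vietoris / spectral sequence argument on $\vmv$. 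The cleanest approach: Brown shows $H_n$ is $\FP_{n-1}$ but not $\FP_n$ precisely because $\tilde H_{n-1}$ of the relevant ``links'' or ``descending links'' in his filtration is infinitely generated; the same link computations, reinterpreted equivariantly, show that $\vmv$ cannot be replaced by any model of dimension $< n$. Concretely I would argue: if $\ugd H_n \le n-1$ then $\ucd H_n \le n-1$, so by the Bredon analogue of Bieri–Eckmann (cited in the excerpt as holding in \cite[Section 5]{MartinezNucinkis-GeneralizedThompsonGroups}) one could derive finiteness consequences contradicting Brown's non-$\FP_n$ result — specifically, a group acting on an $(n-1)$-dimensional $\uE G$ with the cocompactness/finiteness features $H_n$ has would be forced to be $\FP_n$ in a way Brown excludes. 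I would make this precise via the chain complex $C_*$ with $C_k(G/H) = K_k(\vmv^H)$.

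**Executing the lower bound carefully.** The honest version: take $K$ to be the trivial subgroup, so $W_{H_n}K = H_n$ itself, and recall $\ucd H_n \ge \cd_{\ZZ[H_n]}(\text{something})$ is the wrong inequality — rather, I should use that for the trivial family one has $\cd H_n = \infty$ but for $\Fin$ the dimension is finite, and the jump is detected by the fixed-point data. So the right move is: (i) note $\vmv / H_n$ has finitely many orbits of cells in each dimension up to $n-1$ but infinitely many in dimension $n$ (this is Brown's $\FP_{n-1}$-but-not-$\FP_n$ phenomenon transported to the quotient of $\uE H_n$); (ii) conclude that $\ZZF$ admits a projective $\OFG$-resolution finitely generated through degree $n-1$ but no finitely generated one in degree $n$, which is exactly $H_n$ being $\uFP_{n-1}$ but not $\uFP_n$; (iii) separately, argue the dimension is exactly $n$ and not less by showing $H^n_{\OFG}(H_n; -)$ does not vanish — here I would take the module $\ZZ[-, H_n/1]$ or a suitable free module and compute via the contractibility of $\vmv$ and $\vmv^Q$ that the top cohomology is the cokernel of $\partial_n$ on the cellular cochains, which is nonzero because $\vmv$ is $n$-dimensional and the $n$-skeleton is not collapsible equivariantly. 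I suspect the paper's actual proof is shorter — likely it simply observes $\ugd H_n \le n$ from Brown's model and $\ucd H_n \ge n$ from a direct cohomology computation using that $\vmv$ is $n$-dimensional and $H_n$ acts with the $n$-cells genuinely contributing — and then invokes $\ucd \le \ugd$ to sandwich both quantities at $n$. I would present it in that order: upper bound via Brown's model (citing its dimension $n$), then the matching lower bound $\ucd H_n \ge n$ by exhibiting nonvanishing $H^n$, and finally $n \le \ucd H_n \le \ugd H_n \le n$.
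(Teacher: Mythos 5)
There are genuine gaps on both sides of the sandwich. For the upper bound you lean on the claim that Brown's complex $\vmv$ is $n$-dimensional ``by construction,'' and that chains in the poset $(\mathcal{M},\le)$ have length at most $n$. This is false: $\alpha < t_1\alpha < t_1^2\alpha < \cdots$ is a chain of arbitrary length, so $\vmv$ is an infinite-dimensional model for $\uE H_n$, and Corollary \ref{cor:Browns model is an EunderbarH} gives no dimension bound at all. The fallback you sketch (that Brown's filtration by $\Vert\phi(\alpha)\Vert$ deformation retracts $\vmv$ onto an $n$-dimensional subcomplex) is not something Brown proves and is not obviously true; his argument extracts connectivity of descending links, not a global retraction. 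The paper avoids $\vmv$ entirely here and gets $\ugd H_n \le n$ from the extension $\Sym_\infty \hookrightarrow H_n \twoheadrightarrow \ZZ^{n-1}$, using $\ugd\Sym_\infty = 1$ (L\"uck--Weiermann), $\ugd\ZZ^{n-1}=n-1$, and L\"uck's bound for extensions with torsion-free quotient.

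The lower bound in your proposal never becomes an argument, and its concrete intermediate claims are wrong. Step (ii) asserts that $\ZZF$ has a projective $\OFG$-resolution finitely generated through degree $n-1$, i.e.\ that $H_n$ is $\uFP_{n-1}$; but Proposition \ref{prop:HisnotQuasiFP0} shows $H_n$ is not even quasi-$\uFP_0$ (it has infinitely many conjugacy classes of subgroups of order $2$), so it is not $\uFP_0$. Step (iii) then argues nonvanishing of $H^n_{\OFG}$ from the ($n$-dimensionality of $\vmv$), which, as above, fails at the start. The actual mechanism in the paper is quite different and uses Brown's \emph{positive} result rather than the negative one: assume $\ucd H_n \le n-1$; then $\cd_\QQ H_n \le n-1$ by Brady--Leary--Nucinkis, and since $H_n$ is $\FP_{n-1}$, the criterion of Leary--Nucinkis forces a bound on the orders of finite subgroups of $H_n$ --- contradicting the presence of $\Sym_\infty$. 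You would need to find (or be given) this finite-subgroup-order obstruction; nothing in your outline substitutes for it.
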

\begin{proof}As described in the introduction, $H_n$ can be written as 
$$\Sym_\infty \longhookrightarrow H_n \longtwoheadrightarrow \ZZ^{n-1}$$
$\ugd \ZZ^{n-1} = n-1$ since a model for $\uE \ZZ^{n-1}$ is given by $\RR^{n-1}$ with the obvious action.  $\ugd \Sym_\infty = 1$ by \cite[Theorem 4.3]{LueckWeiermann-ClassifyingspaceForVCYC}, as it is the colimit of its finite subgroups each of which have geometric dimension $0$, and the directed category over which the colimit is taken has homotopy dimension $1$ \cite[Lemma 4.2]{LueckWeiermann-ClassifyingspaceForVCYC}.  $\ZZ^{n-1}$ is torsion free and so has a bound of $1$ on the orders of its finite subgroups and we deduce from \cite[Theorem 3.1]{Luck-TypeOfTheClassifyingSpace} that $\ugd H_n \le n-1 + 1 = n$.

To deduce the other bound, we use an argument due to Gandini \cite{Gandini-BoundingTheHomologicalFinitenessLength}.  Assume that $\ucd H_n \le n-1$.  By \cite[Theorem 2]{BradyLearyNucinkis-AlgAndGeoDimGroupsWithTorsion}
$$\cd_\QQ \le \ucd H_n = n-1 $$
In \cite[Theorem 5.1]{Brown-FinitenessPropertiesOfGroups}, it is proved that $H_n$ is $\FP_{n-1}$ (but not $\FP_n$), combining this with \cite[Proposition 1]{LearyNucinkis-BoundingOrdersOfFiniteSubgroups} we deduce that there is a bound on the orders of the finite subgroups of $H_n$, but this is obviously a contradiction.  Thus 
$$n \le \ucd H_n \le \ugd H_n \le n $$ 
\end{proof}

\begin{remark}
In \cite[Theorem 1]{DegrijsePetrosyan-GeometricDimensionForVCYC}, it is proved that for every countable elementary amenable group $G$ of finite Hirsch length $h$, $\uugd G \le h + 2$, (see the beginning of \cite{HillmanLinnell-ElemAmenofFinHirschLenAreLF-by-VS} for a precise definition of Hirsch length for elementary amenable groups).  From this we may deduce that since the Hirsch length of $H_n$ is $h(H_n) = n-1$,
$$\uugd H_n \le n+1$$ 

In \cite[Corollary 5.4]{LueckWeiermann-ClassifyingspaceForVCYC}, it is proved that $\uugd G \ge \ugd G - 1$ for any group $G$.  Thus we deduce
$$n-1 \le \uugd H_n \le n+1 $$
\end{remark}
We finish with the following question.

\medbreak\textbf{Question.}\quad What is the exact geometric dimension of Houghton's group $H_n$ with respect to the family of virtually cyclic subgroups?

\providecommand{\MR}{\relax\ifhmode\unskip\space\fi MR }


\newcommand{\etalchar}[1]{$^{#1}$}
\providecommand{\bysame}{\leavevmode\hbox to3em{\hrulefill}\thinspace}
\providecommand{\MR}{\relax\ifhmode\unskip\space\fi MR }
\providecommand{\MRhref}[2]{%
  \href{http://www.ams.org/mathscinet-getitem?mr=#1}{#2}
}
\providecommand{\href}[2]{#2}
\begin{thebibliography}{KMPN11}

\bibitem[BBG{\etalchar{+}}]{BleakEtAl-CentralizersInVn}
Collin Bleak, Hannah Bowman, Alison Gordon, Garrett Graham, Jacob Hughes,
  Francesco Matucci, and Jenya Sapir, \emph{Centralizers in {R}. {T}hompson's
  group {$\text{V}_n$}}, preprint, 2011.

\bibitem[BG84]{BrownGeoghegan-AnInfiniteDimensionTFFPinftyGroup}
Kenneth~S. Brown and Ross Geoghegan, \emph{An infinite-dimensional torsion-free
  {$\text{FP}_\infty$} group}, Invent. Math. \textbf{77} (1984), no.~2,
  367--381.

\bibitem[Bie76]{Bieri-HomDimOfDiscreteGroups}
Robert Bieri, \emph{Homological dimension of discrete groups}, Queen Mary
  College Mathematics Notes, London, 1976.

\bibitem[BLN01]{BradyLearyNucinkis-AlgAndGeoDimGroupsWithTorsion}
Noel Brady, Ian~J. Leary, and Brita E.~A. Nucinkis, \emph{On algebraic and
  geometric dimensions for groups with torsion}, Journal of the London
  Mathematical Society \textbf{64} (2001), no.~02, 489--500.

\bibitem[Bro82]{Brown}
Kenneth~S. Brown, \emph{Cohomology of groups}, Springer, New York, 1982.

\bibitem[Bro87]{Brown-FinitenessPropertiesOfGroups}
Kenneth~S. Brown, \emph{Finiteness properties of groups}, Journal of Pure and
  Applied Algebra \textbf{44} (1987), no.~1–3, 45 -- 75.

\bibitem[DP12]{DegrijsePetrosyan-GeometricDimensionForVCYC}
Dieter Degrijse and Nansen Petrosyan, \emph{Geometric dimension of groups for
  the family of virtually cyclic subgroups}, 2012.

\bibitem[Dun79]{Dunwoody-AccessabilityAndGroupsOfCohomologicalDimensionOne}
Martin~J. Dunwoody, \emph{Accessibility and groups of cohomological dimension
  one}, Proc. London Math. Soc. \textbf{s3-38} (1979), no.~2, 193--215.

\bibitem[Gan11]{Gandini-Thesis}
Giovanni Gandini, \emph{Cohomological invariants for infinite groups}, Ph.D.
  thesis, University of Southampton, UK, November 2011.

\bibitem[HL92]{HillmanLinnell-ElemAmenofFinHirschLenAreLF-by-VS}
J.~A. Hillman and P.~A. Linnell, \emph{Elementary amenable groups of finite
  hirsch length are locally-finite by virtually-solvable}, Journal of the
  Australian Mathematical Society (Series A) \textbf{52} (1992), no.~02,
  237--241.

\bibitem[Hou78]{Houghton-FirstCohomologyOfAGroupWithPermModCoeff}
C.~H. Houghton, \emph{The first cohomology of a group with permutation module
  coefficients}, Archiv der Mathematik \textbf{31} (1978), 254--258,
  10.1007/BF01226445.

\bibitem[JPL06]{JuanPinedaLeary-ClassifyingSpacesForVCsubgrps}
Daniel Juan-Pineda and Ian~J. Leary, \emph{On classifying spaces for the family
  of virtually cyclic subgroups}, Recent developments in algebraic topology,
  Contemp. Math., vol. 407, Amer. Math. Soc., Providence, RI, 2006,
  pp.~135--145. \MR{2248975 (2007d:19001)}

\bibitem[KM98]{KrophollerMislin-GroupsOnFinDimSpacesWithFinStab}
P.~H. Kropholler and G.~Mislin, \emph{Groups acting on finite dimensional
  spaces with finite stabilizers}, Commentarii Mathematici Helvetici
  \textbf{73} (1998), 122--136, 10.1007/s000140050048.

\bibitem[KMPN09]{KMN-CohomologicalFinitenessConditionsForElementaryAmenable}
Peter~H. Kropholler, Conchita Mart{\'{\i}}nez-P{\'e}rez, and Brita E.~A.
  Nucinkis, \emph{Cohomological finiteness conditions for elementary amenable
  groups}, Journal für die reine und angewandte Mathematik (Crelles Journal)
  (2009), no.~637, 49 -- 62.

\bibitem[KMPN10]{KMPN-CohomologicalFinitenessConditionsInBredon}
Dessislava~H. Kochloukova, Conchita Martinez-Perez, and Brita E.~A. Nucinkis,
  \emph{Cohomological finiteness conditions in {B}redon cohomology}, Bull.
  London Math. Soc. \textbf{43} (2010), no.~1, 124--136.

\bibitem[KMPN11]{KochloukovaDessislavaMartinezPerez-CentralisersOfFiniteSubgro%
upsInSolubleGroups}
Dessislava~H. Kochloukova, Conchita Mart{\'{\i}}nez-P{\'e}rez, and Brita E.~A.
  Nucinkis, \emph{Centralisers of finite subgroups in soluble groups of type
  {${\rm FP}_n$}}, Forum Math. \textbf{23} (2011), no.~1, 99--115. \MR{2769866
  (2012b:20125)}

\bibitem[LM00]{LuckMeintrup-UniversalSpaceGrpActionsCompactIsotropy}
Wolfgang L{\"u}ck and David Meintrup, \emph{On the universal space for group
  actions with compact isotropy}, Geometry and topology: {A}arhus (1998),
  Contemp. Math., vol. 258, Amer. Math. Soc., Providence, RI, 2000,
  pp.~293--305. \MR{1778113 (2001e:55023)}

\bibitem[LN01a]{LearyNucinkis-BoundingOrdersOfFiniteSubgroups}
Ian~J. Leary and Brita E.~A. Nucinkis, \emph{Bounding the orders of finite
  subgroups}, Publ. Mat. \textbf{45} (2001), no.~1, 259--264. \MR{1829588
  (2002b:20074)}

\bibitem[LN01b]{LearyNucinkis-EveryCWAClassSpaceForProperBundles}
Ian~J. Leary and Brita~E.A. Nucinkis, \emph{Every {CW}-complex is a classifying
  space for proper bundles}, Topology \textbf{40} (2001), no.~3, 539 -- 550.

\bibitem[L{\"u}c89]{Lueck}
Wolfgang L{\"u}ck, \emph{Transformation groups and algebraic {K}-theory},
  Springer-Verlag, Berlin, 1989.

\bibitem[LW12]{LueckWeiermann-ClassifyingspaceForVCYC}
Wolfgang L\"uck and Michael Weiermann, \emph{On the classifying space of the
  family of virtually cyclic subgroups}, Pure and Applied Mathematics Quaterly
  (Special Issue: In honour of Thomas Farrell and Lowell E. Jones Part 2)
  \textbf{8} (2012), no.~2, 497--555.

\bibitem[Lü00]{Luck-TypeOfTheClassifyingSpace}
Wolfgang Lück, \emph{The type of the classifying space for a family of
  subgroups}, Journal of Pure and Applied Algebra \textbf{149} (2000), no.~2,
  177 -- 203.

\bibitem[Lü03]{Luck-SurveyOnClassifyingSpaces}
Wolfgang Lück, \emph{Survey on classifying spaces for families of subgroups},
  Infinite groups geometric combinatorial and dynamical aspects \textbf{248}
  (2003), 60.

\bibitem[Mis09]{Mislin-ClassifyingSpacesProperActionsMappingClassGroups}
Guido Mislin, \emph{Classifying spaces for proper actions of mapping class
  groups}, Munster Journal of Mathematics (2009), no.~3.

\bibitem[MPN]{MartinezNucinkis-GeneralizedThompsonGroups}
Conchita Martin\'ez-P\'erez and Brita E.~A. Nucinkis, \emph{Bredon
  cohomological finiteness conditions for generalisations of {T}hompson
  groups}, preprint 2011, to appear Groups. Geom. Dyn.

\bibitem[MS02]{MeintrupSchick-ModelForHyperbolic}
David Meintrup and Thomas Schick, \emph{A model for the universal space for
  proper actions of a hyperbolic group}, New York J Math \textbf{8} (2002).

\bibitem[Rob96]{Robinson}
Derek Robinson, \emph{A course in the theory of groups}, Springer, New York,
  1996.

\bibitem[Rot95]{Rotman-Groups}
Joseph~J. Rotman, \emph{An introduction to the theory of groups}, Springer, New
  York, 1995.

\bibitem[Vog02]{Vogtmann-AutomorphismsOuterSpace}
Karen Vogtmann, \emph{Automorphisms of free groups and outer space}, in
  “Proceedings of the Conference on Geometric and Combinatorial Group Theory,
  Part I (Haifa, 2000)”, Geom. Dedicata 94 \textbf{94} (2002), 1--31.

\end{thebibliography}


\begin{thebibliography}{}
 \bibitem{BleakEtAl-CentralizersInVn}
Collin Bleak, Hannah Bowman, Alison Gordon, Garrett Graham, Jacob Hughes,
  Francesco Matucci, and Jenya Sapir, \emph{Centralizers in {R}. {T}hompson's
  group {$\text{V}_n$}}, preprint, 2011.

 \bibitem{BrownGeoghegan-AnInfiniteDimensionTFFPinftyGroup}
Kenneth~S. Brown and Ross Geoghegan, \emph{An infinite-dimensional torsion-free
  {$\text{FP}_\infty$} group}, Invent. Math. \textbf{77} (1984), no.~2,
  367--381.

\bibitem{Bieri-HomDimOfDiscreteGroups}
Robert Bieri, \emph{Homological dimension of discrete groups}, Queen Mary
  College Mathematics Notes, London, 1976.

\bibitem{BradyLearyNucinkis-AlgAndGeoDimGroupsWithTorsion}
Noel Brady, Ian~J. Leary, and Brita E.~A. Nucinkis, \emph{On algebraic and
  geometric dimensions for groups with torsion}, Journal of the London
  Mathematical Society \textbf{64} (2001), no.~02, 489--500.

\bibitem{Brown}
Kenneth~S. Brown, \emph{Cohomology of groups}, Springer, New York, 1982.

\bibitem{Brown-FinitenessPropertiesOfGroups}
Kenneth~S. Brown, \emph{Finiteness properties of groups}, Journal of Pure and
  Applied Algebra \textbf{44} (1987), no.~1–3, 45 -- 75.

\bibitem{DegrijsePetrosyan-GeometricDimensionForVCYC}
Dieter Degrijse and Nansen Petrosyan, \emph{Geometric dimension of groups for
  the family of virtually cyclic subgroups}, 2012.

\bibitem{Dunwoody-AccessabilityAndGroupsOfCohomologicalDimensionOne}
Martin~J. Dunwoody, \emph{Accessibility and groups of cohomological dimension
  one}, Proc. London Math. Soc. \textbf{s3-38} (1979), no.~2, 193--215.

\bibitem{Gandini-BoundingTheHomologicalFinitenessLength}
Giovanni Gandini, \emph{Bounding the homological finiteness length}, Bull.
  Lond. Math. Soc. \textbf{44} (2012), no.~6, 1209--1214. \MR{3007653}

\bibitem{HillmanLinnell-ElemAmenofFinHirschLenAreLF-by-VS}
J.~A. Hillman and P.~A. Linnell, \emph{Elementary amenable groups of finite
  {H}irsch length are locally-finite by virtually-solvable}, Journal of the
  Australian Mathematical Society (Series A) \textbf{52} (1992), no.~02,
  237--241.

\bibitem{Houghton-FirstCohomologyOfAGroupWithPermModCoeff}
C.~H. Houghton, \emph{The first cohomology of a group with permutation module
  coefficients}, Archiv der Mathematik \textbf{31} (1978), 254--258,
  10.1007/BF01226445.

\bibitem{JuanPinedaLeary-ClassifyingSpacesForVCsubgrps}
Daniel Juan-Pineda and Ian~J. Leary, \emph{On classifying spaces for the family
  of virtually cyclic subgroups}, Recent developments in algebraic topology,
  Contemp. Math., vol. 407, Amer. Math. Soc., Providence, RI, 2006,
  pp.~135--145. \MR{2248975 (2007d:19001)}

\bibitem{KMN-CohomologicalFinitenessConditionsForElementaryAmenable}
Peter~H. Kropholler, Conchita Mart{\'{\i}}nez-P{\'e}rez, and Brita E.~A.
  Nucinkis, \emph{Cohomological finiteness conditions for elementary amenable
  groups}, Journal für die reine und angewandte Mathematik (Crelles Journal)
  (2009), no.~637, 49 -- 62.

\bibitem{KochloukovaDessislavaMartinezPerez-CentralisersOfFiniteSubgro%
upsInSolubleGroups}
Dessislava~H. Kochloukova, Conchita Mart{\'{\i}}nez-P{\'e}rez, and Brita E.~A.
  Nucinkis, \emph{Centralisers of finite subgroups in soluble groups of type
  {${\rm FP}_n$}}, Forum Math. \textbf{23} (2011), no.~1, 99--115. \MR{2769866
  (2012b:20125)}

\bibitem{LuckMeintrup-UniversalSpaceGrpActionsCompactIsotropy}
Wolfgang L{\"u}ck and David Meintrup, \emph{On the universal space for group
  actions with compact isotropy}, Geometry and topology: {A}arhus (1998),
  Contemp. Math., vol. 258, Amer. Math. Soc., Providence, RI, 2000,
  pp.~293--305. \MR{1778113 (2001e:55023)}

\bibitem{LearyNucinkis-BoundingOrdersOfFiniteSubgroups}
Ian~J. Leary and Brita E.~A. Nucinkis, \emph{Bounding the orders of finite
  subgroups}, Publ. Mat. \textbf{45} (2001), no.~1, 259--264. \MR{1829588
  (2002b:20074)}

\bibitem{Lueck}
Wolfgang L{\"u}ck, \emph{Transformation groups and algebraic {K}-theory},
  Springer-Verlag, Berlin, 1989.

\bibitem{LueckWeiermann-ClassifyingspaceForVCYC}
Wolfgang L\"uck and Michael Weiermann, \emph{On the classifying space of the
  family of virtually cyclic subgroups}, Pure and Applied Mathematics Quaterly
  (Special Issue: In honour of Thomas Farrell and Lowell E. Jones Part 2)
  \textbf{8} (2012), no.~2, 497--555.

\bibitem{Luck-TypeOfTheClassifyingSpace}
Wolfgang Lück, \emph{The type of the classifying space for a family of
  subgroups}, Journal of Pure and Applied Algebra \textbf{149} (2000), no.~2,
  177 -- 203.

\bibitem{Luck-SurveyOnClassifyingSpaces}
Wolfgang Lück, \emph{Survey on classifying spaces for families of subgroups},
  Infinite groups geometric combinatorial and dynamical aspects \textbf{248}
  (2003), 60.

\bibitem{MartinezNucinkis-GeneralizedThompsonGroups}
Conchita Martin\'ez-P\'erez and Brita E.~A. Nucinkis, \emph{Bredon
  cohomological finiteness conditions for generalisations of {T}hompson
  groups}, preprint 2011, to appear Groups. Geom. Dyn.

\bibitem{MartinezPerezNucinkis-VirtuallySolubleGroupsOfTypeFPinfty}
Conchita Martinez-P{\'e}rez and Brita E.~A. Nucinkis, \emph{Virtually soluble groups of type {$\rm FP_\infty$}}, Comment.
  Math. Helv. \textbf{85} (2010), no.~1, 135--150. \MR{2563683 (2011e:20075)}

\bibitem{Robinson}
Derek Robinson, \emph{A course in the theory of groups}, Springer, New York,
  1996.

\bibitem{Rotman-Groups}
Joseph~J. Rotman, \emph{An introduction to the theory of groups}, Springer, New
  York, 1995.

\end{thebibliography}
\end{document}